\newtheorem{thm}{Theorem}[section]
\newtheorem{lem}[thm]{Lemma}
\newtheorem{assum}[thm]{Assumption}
\theoremstyle{definition}
\theoremstyle{remark}
\newtheorem{rem}{Remark}[section]
\newtheorem{defn}{Definition}
\numberwithin{equation}{section}
\DeclareMathOperator{\supp}{supp}
\begin{document}

\title[Tsunami propagation for singular topographies]{Tsunami propagation for singular topographies}

\author[A. Altybay]{Arshyn Altybay}
\address{
  Arshyn Altybay:
  \endgraf
  Al-Farabi Kazakh National University
  \endgraf
  Almaty, Kazakhstan
  \endgraf
   and  
  \endgraf
  Department of Mathematics: Analysis, Logic and Discrete Mathematics
  \endgraf
  Ghent University, Belgium
  \endgraf
   and
  \endgraf
   Institute of Mathematics and Mathematical Modeling
  \endgraf
  Almaty, Kazakhstan
  \endgraf
  {\it E-mail address} {\rm arshyn.altybay@gmail.com}
 }

\author[M. Ruzhansky]{Michael Ruzhansky}
\address{
	Michael Ruzhansky:
	 \endgraf
  Department of Mathematics: Analysis, Logic and Discrete Mathematics
  \endgraf
  Ghent University, Belgium
  \endgraf
  and
  \endgraf
  School of Mathematical Sciences
    \endgraf
    Queen Mary University of London
  \endgraf
  United Kingdom
	\endgraf
  {\it E-mail address} {\rm michael.ruzhansky@ugent.be}
}

\author[M. Sebih]{Mohammed Elamine Sebih}
\address{
  Mohammed Elamine Sebih:
  \endgraf
  Laboratory of Analysis and Control of Partial Differential Equations
  \endgraf
  Djillali Liabes University
  \endgraf
  Sidi Bel Abbes, Algeria
  \endgraf
   and
  \endgraf
  Department of Mathematics: Analysis, Logic and Discrete Mathematics
  \endgraf
  Ghent University, Belgium
  \endgraf
  {\it E-mail address} {\rm sebihmed@gmail.com}
 }

\author[N. Tokmagambetov]{Niyaz Tokmagambetov}
\address{
  Niyaz Tokmagambetov:
    \endgraf
  Department of Mathematics: Analysis, Logic and Discrete Mathematics
  \endgraf
  Ghent University, Belgium
  \endgraf
   and
  \endgraf
  Al-Farabi Kazakh National University
  \endgraf
  Almaty, Kazakhstan
  \endgraf
  {\it E-mail address} {\rm niyaz.tokmagambetov@ugent.be and tokmagambetov@math.kz}
 }

\thanks{The authors were supported  by the FWO Odysseus 1 grant G.0H94.18N: Analysis and Partial Differential Equations. MR was supported in parts by the EPSRC Grant
EP/R003025/1, by the Leverhulme Research Grant RPG-2017-151.}

\keywords{Tsunami propagation, wave equation, Cauchy problem, weak solution, singular coefficient, regularisation, very weak solution, numerical analysis.}
\subjclass[2010]{35L81, 35L05, 	35D30, 35A35.}

\begin{abstract}
We consider a tsunami wave equation with singular coefficients and prove that it has a very weak solution. Moreover, we show the uniqueness results and consistency theorem of the very weak solution with the classical one in some appropriate sense. Numerical experiments are done for the families of regularised problems in one- and two-dimensional cases. In particular, the appearance of a substantial second wave is observed, travelling in the opposite direction from the point/line of singularity. Its structure and strength are analysed numerically. In addition, for the two-dimensional tsunami wave equation, we develop GPU computing algorithms to reduce the computational cost.
\end{abstract}

\maketitle

\tableofcontents


\section{Introduction}
In this work we consider the Cauchy problem for the tsunami wave equation governed by the shallow water equations. Namely, for $T>0$, we study the Cauchy problem
\begin{equation}
    \left\lbrace
    \begin{array}{l}
    u_{tt}(t,x) - \sum_{j=1}^{d}\partial_{x_{j}}\left(h_{j}(x)\partial_{x_{j}}u(t,x)\right)=0, \,\,\,(t,x)\in\left[0,T\right]\times \mathbb{R}^{d},\\
    u(0,x)=u_{0}(x), \,\,\, u_{t}(0,x)=u_{1}(x), \,\,\, x\in\mathbb{R}^{d}, \label{Equation introduction}
    \end{array}
    \right.
\end{equation}
where $\boldsymbol{h}:\mathbb{R}^d \rightarrow \mathbb{R}^d,$ $x\mapsto {\boldsymbol{h}}(x)=\left(h_{1}(x),...,h_{d}(x)\right)^{T}$ is a vector valued function. Our model is a general case of a well known physical model when $h=h_j, \,\, j=1, \dots, d,$ is real valued. In this particular case, $h$ denotes the water depth and $u$ represents the free surface displacement. Let us start by the description of the physical motivation.

Tsunamis are a series of traveling waves in water induced by the displacement of the sea floor due to earthquakes or landslides. Three stages of tsunami development are usually distinguished: the generation phase, the propagation of the waves in the open ocean (or sea) and the propagation near the shoreline. Since the wavelengths of tsunamis are much greater than the water depth, they are often modelled using the shallow water equations. The most common model used to describe tsunamis (see, for instance \cite{Kund07}, \cite{DD07}, \cite{Ren17}, \cite{RS08}, \cite{RS10}, \cite{DDORS14}, \cite{ADD19} and the references therein) is
\begin{equation}
    \left\lbrace
    \begin{array}{l}
    u_{tt}(t,x) - \sum_{j=1}^{d}\partial_{x_{j}}\left(h(x)\partial_{x_{j}}u(t,x)\right)=f(t,x), \,\,\,(t,x)\in\left[0,T\right]\times \mathbb{R}^{d},\\
    u(0,x)=0, \,\,\, u_{t}(0,x)=0, \,\,\, x\in\mathbb{R}^{d}, \label{Equation forced}
    \end{array}
    \right.
\end{equation}
where $f(t,x)$ is a source term related to the formation of a localized disturbance in the first stage of the tsunami life. When analysing the system at the final stages, that is for $t\geq t_0>0$, the source term can be neglected and a homogeneous equation can be considered instead:
\begin{equation}
    u_{tt}(t,x) - \sum_{j=1}^{d}\partial_{x_{j}}\left(h(x)\partial_{x_{j}}u(t,x)\right)=0, \,\,\,(t,x)\in\left[t_0,T\right]\times \mathbb{R}^{d}, \label{Equation free}
\end{equation}
where the initial free surface displacement and the initial velocity can be described by known functions of the spacial variable, i.e.
\begin{equation}
     u(t_0,x)=u_{0}(x), \,\,\, u_{t}(t_0,x)=u_{1}(x), \,\,\, x\in\mathbb{R}^{d}.
\end{equation}
In the present paper, we are interested in the final stages of the tsunami development. So, we consider the latter model, and for the sake of simplicity we take $0$ as the initial time instead of $t_0$. That is, we consider
\begin{equation}
    \left\lbrace
    \begin{array}{l}
    u_{tt}(t,x) - \sum_{j=1}^{d}\partial_{x_{j}}\left(h(x)\partial_{x_{j}}u(t,x)\right)=0, \,\,\,(t,x)\in\left[0,T\right]\times \mathbb{R}^{d},\\
    u(0,x)=u_{0}(x), \,\,\, u_{t}(0,x)=u_{1}(x), \,\,\, x\in\mathbb{R}^{d}, \label{Equation introduction1}
    \end{array}
    \right.
\end{equation}
where we allow the water depth coefficient $h$ to be discontinuous or even to have less regularity. The singularity of $h$ can be interpreted as sudden changes in the water depth caused by the interaction of the wave with complicated topographies of the sea floor such as bays and harbours.

While from a physical point of view this is a natural setting, mathematically we face a problem: If we are looking for distributional solutions, the term $h(x)\partial_{x_i}u(t,x)$ does not make sense in view of Schwartz famous impossibility result about multiplication of distributions \cite{Sch54}. In this context the concept of very weak solutions was introduced in \cite{GR15}, for the analysis of second order hyperbolic equations with irregular coefficients and was further applied in a series of papers \cite{ART19}, \cite{RT17a} and \cite{RT17b} for different physical models, in order to show a wide applicability. In \cite{MRT19, SW20} it was applied for a damped wave equation with irregular dissipation arising from acoustic problems and an interesting phenomenon of the reflection of the original propagating wave was numerically observed. In all these papers the theory of very weak solutions is dealt for time-dependent equations. In the recent works \cite{Gar20, ARST20a, ARST20b, ARST20c}, the authors start to study the concept of very weak solutions for partial differential equations with space-depending coefficients.

It is shown there, that this notion is very well adapted for numerical simulations when a rigorous mathematical formulation of the problem is difficult in the framework of the classical theory of distributions. Furthermore, by the theory of very weak solutions we can talk about uniqueness of numerical solutions to differential equations. So, here we consider the Cauchy problem (\ref{Equation introduction1}) and prove that it has a very weak solution.

Moreover, since numerical solutions are useful for predicting and understanding tsunami propagation, many numerical models are developed in the literature, we cite fore instance \cite{Behr10, LGB11, RHH11, BD15}. As a second task in the present paper we do some numerical computations, where we observe interesting behaviours of solutions.

\section{Main results}
For $T>0$, we consider the Cauchy problem
\begin{equation}
    \left\lbrace
    \begin{array}{l}
    u_{tt}(t,x) - \sum_{j=1}^{d}\partial_{x_{j}}\left(h_{j}(x)\partial_{x_{j}}u(t,x)\right)=0, \,\,\,(t,x)\in\left[0,T\right]\times \mathbb{R}^{d},\\
    u(0,x)=u_{0}(x), \,\,\, u_{t}(0,x)=u_{1}(x), \,\,\, x\in\mathbb{R}^{d}, \label{Equation}
    \end{array}
    \right.
\end{equation}
where ${\boldsymbol{h}}:\mathbb{R}^{d}\rightarrow \mathbb{R}^{d}$; $x\mapsto {\boldsymbol{h}}(x)=\left(h_{1}(x),...,h_{d}(x)\right)^{T}$ is singular and positive in the sense that there exists $c_0 >0$ such that for all $j=1,...,d$ we have $0<c_0 \leq h_j$. The following lemma is a key of the proof of existence, uniqueness and consistency of a very weak solution to our model problem. It is stated in the case when ${\boldsymbol{h}}$ is a regular vector-function.

In what follows we will use the following notations. By writing $a\lesssim b$ for functions $a$ and $b$, we mean that there exists a positive constant $c$ such that $a\leq c b$. Also, we denote
\begin{equation*}
\Vert u(t,\cdot)\Vert := \Vert u(t,\cdot)\Vert_{H^{2}} = \Vert u(t,\cdot)\Vert_{L^2} + \Vert\sum_{j=1}^{d} \partial_{x_j} u(t,\cdot)\Vert_{L^2} + \Vert \Delta u(t,\cdot)\Vert_{L^2}.
\end{equation*}
In addition, we introduce the Sobolev space $W^{1,\infty}(\mathbb{R}^d)$ by
\begin{equation*}
W^{1,\infty}(\mathbb{R}^d):=\left\{ f \text{\,is measurable:}\, \Vert f\Vert_{W^{1,\infty}}:= \Vert f\Vert_{L^{\infty}} + \Vert \nabla f\Vert_{L^{\infty}} < +\infty \right\}.
\end{equation*}

\begin{thm}
\label{lem 1}
Let ${\boldsymbol{h}}\in \left[L^{\infty}\left(\mathbb{R}^d\right)\right]^{d}$ be positive. Assume that $u_0 \in H^{1}(\mathbb{R}^d)$ and $u_1 \in L^{2}(\mathbb{R}^d)$.
Then, the unique solution $u\in C([0, T]; H^{1}(\mathbb R^d))\cap C^{1}([0, T]; L^{2}(\mathbb R^d))$ to the Cauchy problem (\ref{Equation}), satisfies the estimates
\begin{equation}
    \Vert u(t,\cdot)\Vert_{L^2} + \Vert u_{t}(t,\cdot)\Vert_{L^2} + \sum_{j=1}^{d}\Vert \partial_{x_j}u(t,\cdot)\Vert_{L^2} \lesssim \left(1 + \sum_{j=1}^{d}\Vert h_j\Vert_{L^{\infty}}^{\frac{1}{2}}\right) \left[ \Vert u_0\Vert_{H^1} + \Vert u_1\Vert_{L^2} \right], \label{Energy estimate}
\end{equation}
for all $t\in [0,T]$.

In addition, assume that $\boldsymbol{h}\in \left[W^{1,\infty}(\mathbb{R}^d)\right]^d$, $u_0 \in H^{2}(\mathbb{R}^d)$ and $u_1 \in H^{1}(\mathbb{R}^d)$. Moreover, if $h_j(x) = h(x)$ for all $j=1, \dots, d$. Then, the solution $u\in C([0, T]; H^{2}(\mathbb R^d))\cap C^{1}([0, T]; H^{1}(\mathbb R^d))$ satisfies the estimate
\begin{equation}
\Vert \Delta u\Vert_{L^2} \lesssim H\left(1+H\right)\left[\Vert u_0\Vert_{H^2} + \Vert u_1\Vert_{H^1}\right], \label{Energy estimate 1}
\end{equation}
for all $t\in [0,T]$, where $H = \max \left\{ \Vert h\Vert_{W^{1,\infty}}^{\frac{1}{2}},\,\Vert h\Vert_{W^{1,\infty}} \right\}$.
\end{thm}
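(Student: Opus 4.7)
The approach is the classical energy method, applied in two stages. For the first bound (\ref{Energy estimate}), I would introduce the natural energy
\begin{equation*}
E(t) := \|u_{t}(t,\cdot)\|_{L^{2}}^{2} + \sum_{j=1}^{d}\int_{\mathbb{R}^{d}} h_{j}(x)\,|\partial_{x_{j}}u(t,x)|^{2}\,\d x,
\end{equation*}
multiply the PDE by $u_{t}$ and integrate by parts in $x_{j}$ to obtain $\tfrac{\d}{\d t}E(t)=0$, hence $E(t)=E(0)$. The positivity $h_{j}\geq c_{0}>0$ then converts the conservation of $E$ into a lower bound on $\|u_{t}\|_{L^{2}}^{2}+\sum_{j}\|\partial_{x_{j}}u\|_{L^{2}}^{2}$, while the initial data are controlled above by $\|u_{1}\|_{L^{2}}^{2}+\sum_{j}\|h_{j}\|_{L^{\infty}}\|\partial_{x_{j}}u_{0}\|_{L^{2}}^{2}$; taking square roots produces the factor $1+\sum_{j}\|h_{j}\|_{L^{\infty}}^{1/2}$ claimed on the right-hand side. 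The $L^{2}$-norm of $u$ itself is then recovered from the identity $u(t,\cdot)=u_{0}+\int_{0}^{t}u_{s}(s,\cdot)\,\d s$, absorbing the time factor $T$ into $\lesssim$.

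For (\ref{Energy estimate 1}), the idea is to apply the first estimate to $v:=u_{t}$. Differentiating the equation in $t$ shows that $v$ solves the same wave equation with initial data $v(0)=u_{1}$ and $v_{t}(0)=u_{tt}(0)=h\Delta u_{0}+\nabla h\cdot\nabla u_{0}$, and the hypothesis $h\in W^{1,\infty}$ yields $\|v_{t}(0)\|_{L^{2}}\leq \|h\|_{W^{1,\infty}}\|u_{0}\|_{H^{2}}$. Applying (\ref{Energy estimate}) to $v$ therefore delivers a bound on $\|u_{tt}(t,\cdot)\|_{L^{2}}$. Now, since $h_{j}\equiv h$, the PDE rewrites as $h\Delta u=u_{tt}-\nabla h\cdot\nabla u$; dividing by $h\geq c_{0}$ and taking $L^{2}$-norms gives
\begin{equation*}
\|\Delta u(t,\cdot)\|_{L^{2}}\lesssim \|u_{tt}(t,\cdot)\|_{L^{2}}+\|\nabla h\|_{L^{\infty}}\|\nabla u(t,\cdot)\|_{L^{2}}.
\end{equation*}
Substituting the bound on $u_{tt}$ together with the first-stage bound on $\|\nabla u\|_{L^{2}}$, and using the elementary inequalities $\|h\|_{L^{\infty}}^{1/2},\, \|\nabla h\|_{L^{\infty}},\, \|h\|_{W^{1,\infty}}\leq H$ (which follow directly from the definition of $H$), all the factors collapse into the claimed $H(1+H)$ multiplier.

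The main obstacle is rigorously justifying the two formal manipulations above --- the energy identity and the fact that $u_{t}$ solves the differentiated equation --- when $\boldsymbol{h}$ has only $L^{\infty}$ or $W^{1,\infty}$ regularity. My plan is to handle both by a standard mollification argument: regularise $h$ to a smooth $h_{\varepsilon}$, perform the calculation in the smooth setting where differentiation in $t$ and integration by parts in $x$ are unproblematic, and pass to the limit using the fact that the right-hand sides of both (\ref{Energy estimate}) and (\ref{Energy estimate 1}) depend on $h$ only through its $L^{\infty}$ and $W^{1,\infty}$ norms, which are stable under mollification. Since existence of the solution in the stated regularity class is assumed, a density/weak-limit argument suffices to transfer the identities and the resulting inequalities to the rough-coefficient setting.
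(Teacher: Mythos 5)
Your proposal is correct and follows essentially the same route as the paper: the conserved energy $\|u_t\|_{L^2}^2+\sum_j\|h_j^{1/2}\partial_{x_j}u\|_{L^2}^2$ combined with the lower bound $h_j\geq c_0$ and the fundamental theorem of calculus for $\|u\|_{L^2}$, then applying the first-order estimate to $v=u_t$ (whose second initial datum is $\sum_j\partial_{x_j}(h_j\partial_{x_j}u_0)$, controlled by $\|h\|_{W^{1,\infty}}\|u_0\|_{H^2}$) and recovering $\Delta u$ from the rewritten equation $h\Delta u=u_{tt}-\nabla h\cdot\nabla u$ using $h\geq c_0$, exactly as in the paper's Lemma on $\|\Delta u\|_{L^2}\lesssim\|h\sum_j\partial_{x_j}^2u\|_{L^2}$. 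Your closing remark on justifying the formal manipulations by mollifying $h$ is a reasonable addition that the paper does not spell out.
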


\begin{proof}
We multiply the equation in (\ref{Equation}) by $u_t$ and we integrate with respect to the variable $x$, to obtain
\begin{equation}
Re \left( \langle u_{tt}(t,\cdot),u_{t}(t,\cdot)\rangle_{L^2} + \sum_{j=1}^{d}\langle i\partial_{x_j} (h_{j}i\partial_{x_j}u(t,\cdot)),u_{t}(t,\cdot)\rangle_{L^2} \right) = 0, \label{Energy functional}
\end{equation}
where $\langle \cdot, \cdot \rangle_{L^2}$ denotes the inner product of the Hilbert space $L^2(\mathbb{R}^{d})$ and $i$ is the imaginary unit, such that $i^2 =-1$.
After short calculations, we easily show that
\begin{equation}
    Re \langle u_{tt}(t,\cdot),u_{t}(t,\cdot)\rangle_{L^2} = \frac{1}{2}\partial_{t} \Vert u_t\Vert_{L^2}^2
\end{equation}
and
\begin{equation}
    Re \sum_{j=1}^{d}\langle i\partial_{x_j} (h_{j}i\partial_{x_j}u(t,\cdot)),u_{t}(t,\cdot)\rangle_{L^2} = \frac{1}{2}\sum_{j=1}^{d}\partial_{t} \Vert h_{j}^{\frac{1}{2}}\partial_{x_j}u(t,\cdot)\Vert_{L^2}^2.
\end{equation}
Then, from (\ref{Energy functional}), we get the energy conservation formula
\begin{equation}
\label{CL-01}
\partial_{t} \left( \Vert u_t\Vert_{L^2}^2 + \sum_{j=1}^{d} \Vert h_{j}^{\frac{1}{2}}\partial_{x_j}u(t,\cdot)\Vert_{L^2}^2 \right) = 0.
\end{equation}
By taking in consideration that $\Vert h_{j}^{\frac{1}{2}}\partial_{x_j}u_0\Vert_{L^2}^2$ can be estimated by
\begin{equation}
    \Vert h_{j}^{\frac{1}{2}}\partial_{x_j}u_0\Vert_{L^2}^2 \leq \Vert h_j\Vert_{L^{\infty}}\Vert u_0\Vert_{H^1}^2
\end{equation}
for all $j=1,...,d$, it follows that
\begin{equation}
    \Vert u_t\Vert_{L^2}^2 \leq \Vert u_1\Vert_{L^2}^2 + \sum_{j=1}^{d}\Vert h_j\Vert_{L^{\infty}}\Vert u_0\Vert_{H^1}^2 \label{Estimate u_t}
\end{equation}
and
\begin{equation}
    \Vert h_{i}^{\frac{1}{2}}\partial_{x_i}u(t,\cdot)\Vert_{L^2}^2 \leq \Vert u_1\Vert_{L^2}^2 + \sum_{j=1}^{d}\Vert h_j\Vert_{L^{\infty}}\Vert u_0\Vert_{H^1}^2,
\label{Estimate h u}
\end{equation}
for all $i=1,...,d$.

In the last inequality, using that the left hand side can be estimated by
\begin{equation}
    \Vert h_{i}^{\frac{1}{2}}\partial_{x_i}u(t,\cdot)\Vert_{L^2}^2 \geq \inf\limits_{x\in\mathbb R^{d}}|h_i(x)| \, \Vert \partial_{x_i}u(t,\cdot)\Vert_{L^2}^2,
\end{equation}
and that $h$ is positive, we get for all $i=1,...,d$ the estimate
\begin{equation}
    \Vert \partial_{x_i}u(t,\cdot)\Vert_{L^2}^2 \lesssim \Vert u_1\Vert_{L^2}^2 + \sum_{j=1}^{d}\Vert h_j\Vert_{L^{\infty}}\Vert u_0\Vert_{H^1}^2. \label{Estimate u_x_j}
\end{equation}
Let us estimate $u$. By the fundamental theorem of calculus we have that
\begin{equation}
    u(t,x) = u_0(x) + \int_{0}^{t} u_{t}(s,x) ds. \label{Representation u}
\end{equation}
Taking the $L^2$ norm in (\ref{Representation u}) and using (\ref{Estimate u_t}) to estimate $u_t$, we arrive at
\begin{equation}
    \Vert u(t,\cdot)\Vert_{L^2} \lesssim \left(1 + \sum_{j=1}^{d}\Vert h_j\Vert_{L^{\infty}}^{\frac{1}{2}}\right) \left[ \Vert u_0\Vert_{H^1} + \Vert u_1\Vert_{L^2} \right]. \label{Estimate u}
\end{equation}


Now, let us assume that $\boldsymbol{h}\in \left[W^{1,\infty}(\mathbb{R}^d)\right]^d$, $u_0 \in H^{2}(\mathbb{R}^d)$ and $u_1 \in H^{1}(\mathbb{R}^d)$. We note that, if $u$ solves the Cauchy problem
\begin{equation}
    \left\lbrace
    \begin{array}{l}
    \partial_{t}^{2}u(t,x) - \sum_{j=1}^{d}\partial_{x_{j}}\left(h_{j}(x)\partial_{x_{j}}u(t,x)\right)=0, \,\,\,(t,x)\in\left[0,T\right]\times \mathbb{R}^{d},\\
    u(0,x)=u_{0}(x), \,\,\, u_{t}(0,x)=u_{1}(x), \,\,\, x\in\mathbb{R}^{d},
    \end{array}
    \right.
\end{equation}
then $u_t$ solves
\begin{equation}
    \left\lbrace
    \begin{array}{l}
    \partial_{t}^{2}u_{t}(t,x) - \sum_{j=1}^{d}\partial_{x_{j}}\left(h_{j}(x)\partial_{x_{j}}u_{t}(t,x)\right)=0, \,\,\,(t,x)\in\left[0,T\right]\times \mathbb{R}^{d},\\
    u_{t}(0,x)=u_{1}(x), \,\,\, \partial_{t}u_{t}(0,x)=\sum_{j=1}^{d}\partial_{x_{j}}\left(h_{j}(x)\partial_{x_{j}}u_{0}(x)\right), \,\,\, x\in\mathbb{R}^{d}.
    \end{array}
    \right.
\end{equation}
Then, using the estimates (\ref{Estimate u_t}) and \eqref{Estimate h u}, we get
\begin{equation}
    \Vert u_{tt}(t,\cdot)\Vert_{L^2} \lesssim \sum_{j=1}^{d}\Vert h_j\Vert_{W^{1,\infty}} \Vert u_0\Vert_{H^2} + \sum_{j=1}^{d}\Vert h_j\Vert_{W^{1,\infty}}^{\frac{1}{2}} \Vert u_1\Vert_{H^1},
\label{Estimate u_tt}
\end{equation}
\begin{equation}
    \Vert h_{i}^{\frac{1}{2}}\partial_{x_i}u_{t}(t,\cdot)\Vert_{L^2} \lesssim \sum_{j=1}^{d}\Vert h_j\Vert_{W^{1,\infty}} \Vert u_0\Vert_{H^2} + \sum_{j=1}^{d}\Vert h_j\Vert_{W^{1,\infty}}^{\frac{1}{2}} \Vert u_1\Vert_{H^1},
\label{Estimate u_t_x}
\end{equation}
where for all $i=1,...,d$, we estimated $\Vert \partial_{x_{i}}\left(h_{i}(\cdot)\partial_{x_{i}}u_{0}(\cdot)\right)\Vert_{L^2}$ by
\begin{equation}
    \Vert \partial_{x_{i}}\left(h_{i}(\cdot)\partial_{x_{i}}u_{0}(\cdot)\right)\Vert_{L^2} \lesssim \Vert h_i\Vert_{W^{1,\infty}} \Vert u_0\Vert_{H^2}.
\end{equation}

To get the estimate \eqref{Energy estimate 1}, we need the following result.
\begin{lem} Assume that $h_{j}(x)=h(x)$ for all $j=1, \dots, d.$ Under the conditions and arguments of Theorem \ref{lem 1}, we obtain
$$
\Vert \Delta u(t,\cdot)\Vert_{L^2}^{2} \lesssim \Vert h(\cdot) \sum_{j=1}^{d} \partial_{x_j}^{2}u(t,\cdot)\Vert_{L^2}^{2} = \Vert \sum_{j=1}^{d}  h_{j}(\cdot)\partial_{x_j}^{2}u(t,\cdot)\Vert_{L^2}^{2},
$$
for all $t\in[0, T]$.
\label{L1}
\end{lem}
\begin{proof} Using the assumption that $h_i$ are bounded from below, that is,
$$
\min\limits_{0\leq i \leq d}\{\inf\limits_{x\in\mathbb R^d}h_{i}(x)\}=c_0>0,
$$
for all $i=1,...,d$, we get
$$
\Vert \Delta u(t,x)\Vert_{L^2}^{2} \lesssim c_0^2 \, \Vert \sum_{j=1}^{d} \partial_{x_j}^{2}u(t,x)\Vert_{L^2}^{2} \leq \Vert h(x) \sum_{j=1}^{d} \partial_{x_j}^{2}u(t,x)\Vert_{L^2}^{2}.
$$
It proves the lemma.
\end{proof}

The equation in (\ref{Equation}) implies
\begin{equation}
    \sum_{j=1}^{d}h_{j}(x)\partial_{x_j}^{2}u(t,x) = u_{tt}(t,x) - \sum_{j=1}^{d}\partial_{x_j}h_{j}(x)\partial_{x_j}u(t,x). \label{Equation Delta u}
\end{equation}
Taking the $L^2$-norm on both sides in (\ref{Equation Delta u}) and using Lemma \ref{L1}, we obtain
\begin{align}
    \Vert \Delta u(t,x)\Vert_{L^2} & \lesssim \Vert u_{tt}(t,\cdot)\Vert_{L^2} + \sum_{j=1}^{d}\Vert \partial_{x_j}h_{j}(\cdot)\partial_{x_j}u(t,\cdot)\Vert_{L^2} \nonumber \\
    & \lesssim \Vert u_{tt}(t,\cdot)\Vert_{L^2} + \sum_{j=1}^{d}\Vert h_{j}(\cdot)\Vert_{W^{1,\infty}}\Vert \partial_{x_j}u(t,\cdot)\Vert_{L^2}.
\end{align}
Using so far proved estimates (\ref{Estimate u_x_j}) and (\ref{Estimate u_tt}), we get our estimate for $\Delta u$. This ends the proof of the theorem.
\end{proof}

\subsection{Existence of a very weak solution}
In what follows, we consider the Cauchy problem
\begin{equation}
    \left\lbrace
    \begin{array}{l}
    u_{tt}(t,x) - \sum_{j=1}^{d}\partial_{x_{j}}\left(h_{j}(x)\partial_{x_{j}}u(t,x)\right)=0, \,\,\,(t,x)\in\left[0,T\right]\times \mathbb{R}^{d},\\
    u(0,x)=u_{0}(x), \,\,\, u_{t}(0,x)=u_{1}(x), \,\,\, x\in\mathbb{R}^{d}, \label{Equation singular}
    \end{array}
    \right.
\end{equation}
with singular coefficients and initial data. Now we want to prove that it has a very weak solution. To start with, we regularise the coefficients $h_i$ and the Cauchy data $u_0$ and $u_1$ by convolution with a suitable mollifier $\psi$, generating families of smooth functions $(h_{i,\varepsilon})_{\varepsilon}$, $(u_{0,\varepsilon})_{\varepsilon}$ and $(u_{1,\varepsilon})_{\varepsilon}$, that is
\begin{equation}
    h_{i,\varepsilon}(x) = h_i\ast \psi_{\varepsilon}(x) \,\,\, \text{for}\,\, i=1,...,d
\end{equation}
and
\begin{equation}
    u_{0,\varepsilon}(x) = u_0\ast \psi_{\varepsilon}(x), \,\,\, u_{1,\varepsilon}(x) = u_1\ast \psi_{\varepsilon}(x),
\end{equation}
where
\begin{equation}
    \psi_{\varepsilon}(x) = \varepsilon^{-1}\psi(x/\varepsilon),\,\,\,\varepsilon\in\left(0,1\right].
\end{equation}
The function $\psi$ is a Friedrichs-mollifier, i.e. $\psi\in C_{0}^{\infty}(\mathbb{R}^{d})$, $\psi\geq 0$ and $\int\psi =1$.

\begin{assum}
In order to prove the well posedness of the Cauchy problem (\ref{Equation singular}) in the very weak sense, we ask for the regularisations of the coefficients $(h_{i,\varepsilon})_{\varepsilon}$ and the Cauchy data $(u_{0,\varepsilon})_{\varepsilon}$, $(u_{1,\varepsilon})_{\varepsilon}$ to satisfy the assumptions that there exist $N_0,N_1,N_2\in \mathbb{N}_0$ such that
\begin{equation}
    \Vert h_{i,\varepsilon}\Vert_{W^{1,\infty}} \lesssim \varepsilon^{-N_0} \label{assum coeff}
\end{equation}
for $i=1,...,d$ and
\begin{equation}
    \Vert u_{0,\varepsilon}\Vert_{H^{2}} \lesssim \varepsilon^{-N_1},\,\,\,\,\,\Vert u_{1,\varepsilon}\Vert_{H^{1}} \lesssim \varepsilon^{-N_2}. \label{assum data}
\end{equation}
\end{assum}

\begin{rem}
We note that making an assumption on the regularisation is more general than making it on the function itself. We also mention that such assumptions on distributional coefficients, are natural. Indeed, we know that for $T\in \mathcal{E}'(\mathbb{R}^{d})$ we can find $n\in \mathbb{N}$ and functions $f_{\alpha}\in C(\mathbb{R}^{d})$ such that, $T=\sum_{\vert \alpha\vert \leq n}\partial^{\alpha}f_{\alpha}$. The convolution of $T$ with a mollifier gives
\begin{equation}
    T\ast\psi_{\varepsilon}=\sum_{\vert \alpha\vert \leq n}\partial^{\alpha}f_{\alpha}\ast\psi_{\varepsilon}=\sum_{\vert \alpha\vert \leq n}f_{\alpha}\ast\partial^{\alpha}\psi_{\varepsilon}=\sum_{\vert \alpha\vert \leq n}\varepsilon^{-\vert\alpha\vert}f_{\alpha}\ast\left(\varepsilon^{-1}\partial^{\alpha}\psi(x/\varepsilon)\right),
\end{equation}
and we easily see that the regularisation of $T$ satisfy the above assumption. Fore more details, we refer to the structure theorems for distributions (see, e.g. \cite{FJ98}).
\end{rem}

\begin{defn}[Moderateness] \label{defn:Moderatness}
\leavevmode
\begin{itemize}
    \item[(i)]  A net of functions $(f_{\varepsilon})_{\varepsilon}$, is said to be $H^1$-moderate, if there exist $N\in\mathbb{N}_{0}$ such that
\begin{equation*}
    \Vert g_{\varepsilon}\Vert_{H^1} \lesssim \varepsilon^{-N}.
\end{equation*}
    \item[(ii)] A net of functions $(g_{\varepsilon})_{\varepsilon}$, is said to be $H^2$-moderate, if there exist $N\in\mathbb{N}_{0}$ such that
\begin{equation*}
    \Vert g_{\varepsilon}\Vert_{H^2} \lesssim \varepsilon^{-N}.
\end{equation*}
    \item[(iii)] A net of functions $(h_{\varepsilon})_{\varepsilon}$, is said to be ${W^{1,\infty}}$-moderate, if there exist $N\in\mathbb{N}_{0}$ such that
\begin{equation*}
    \Vert h_{\varepsilon}\Vert_{W^{1,\infty}} \lesssim \varepsilon^{-N}.
\end{equation*}
    \item[(iv)] A net of functions $(u_{\varepsilon})_{\varepsilon}$ from $C([0,T]; H^{2}(\mathbb R^d))\cap C^{1}([0,T]; H^{1}(\mathbb R^d))$ is said to be $C^1$-moderate, if there exist $N\in\mathbb{N}_{0}$ such that
\begin{equation*}
    \Vert u_{\varepsilon}(t,\cdot)\Vert \lesssim \varepsilon^{-N}
\end{equation*}
for all $t\in[0,T]$.
\end{itemize}
\end{defn}

We note that if $h_{i}\in \mathcal{E}'(\mathbb{R}^{d})$ for $i=1,...d$ and $u_0, u_1\in \mathcal{E}'(\mathbb{R}^{d})$, then the regularisations $(h_{i,\varepsilon})_{\varepsilon}$ for $i=1,...d$ of the coefficients and $(u_{0,\varepsilon})_{\varepsilon}$,  $(u_{1,\varepsilon})_{\varepsilon}$ of the Cauchy data, are moderate in the sense of the last definition.

\begin{defn}[Very weak solution]
The net $(u_{\varepsilon})_{\varepsilon}\in C([0,T]; H^{1}(\mathbb R^d))\cap C^{1}([0,T]; L^{2}(\mathbb R^d))$ is said to be a very weak solution to the Cauchy problem (\ref{Equation singular}), if there exist
\leavevmode
\begin{itemize}
    \item ${W^{1,\infty}}$-moderate regularisations of the coefficients $h_i$, for $i=1,...d$,
    \item $H^2$-moderate regularisation of $u_0$,
    \item $H^1$-moderate regularisation of $u_1$,
\end{itemize}
such that $(u_{\varepsilon})_{\varepsilon}$ solves the regularised problem
\begin{equation}
    \left\lbrace
    \begin{array}{l}
    \partial_{t}^{2}u_{\varepsilon}(t,x) - \sum_{j=1}^{d}\partial_{x_{j}}\left(h_{j,\varepsilon}(x)\partial_{x_{j}}u_{\varepsilon}(t,x)\right)=0, \,\,\,(t,x)\in\left[0,T\right]\times \mathbb{R}^{d},\\
    u_{\varepsilon}(0,x)=u_{0,\varepsilon}(x), \,\,\, \partial_{t}u_{\varepsilon}(0,x)=u_{1,\varepsilon}(x), \,\,\, x\in\mathbb{R}^{d}, \label{Equation regularized}
    \end{array}
    \right.
\end{equation}
for all $\varepsilon\in\left(0,1\right]$, and is $C^1$-moderate.
\end{defn}

\begin{thm}[Existence]
Let the coefficients $(h_i)$ be positive in the sense that all regularisations $(h_i)_\varepsilon$ are positive, for $i=1,...,d$,  and assume that the regularisations of $h_i$, $u_0$, $u_1$ satisfy the assumptions (\ref{assum coeff}) and (\ref{assum data}). Then the Cauchy problem (\ref{Equation singular}) has a very weak solution.
\end{thm}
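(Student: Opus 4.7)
The plan is to take as a candidate very weak solution the net $(u_\varepsilon)_\varepsilon$ of classical solutions of the regularised Cauchy problem (\ref{Equation regularized}). For every fixed $\varepsilon\in(0,1]$ the regularised coefficients $h_{j,\varepsilon}=h_j\ast\psi_\varepsilon$ are smooth and, by assumption (\ref{assum coeff}) combined with the hypothesis that each $h_{j,\varepsilon}$ is positive, belong to $W^{1,\infty}(\mathbb R^d)$ and are bounded below by $c_0$; likewise $u_{0,\varepsilon}\in H^2(\mathbb R^d)$ and $u_{1,\varepsilon}\in H^1(\mathbb R^d)$ by (\ref{assum data}). This is precisely the setting in which Theorem \ref{lem 1} applies, and it produces, for each $\varepsilon$ separately, a unique solution $u_\varepsilon\in C([0,T];H^2(\mathbb R^d))\cap C^1([0,T];H^1(\mathbb R^d))$ together with the a priori bounds (\ref{Energy estimate}) and (\ref{Energy estimate 1}) in which $h_j$, $u_0$, $u_1$ are now replaced by their regularisations. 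In particular, each $u_\varepsilon$ is a classical solution of (\ref{Equation regularized}) and the first requirement in the definition of a very weak solution is met.

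\textbf{Moderateness of $(u_\varepsilon)_\varepsilon$.}
It remains to verify that $(u_\varepsilon)_\varepsilon$ is $C^1$-moderate in the sense of Definition \ref{defn:Moderatness}(iv); this is the step in which (\ref{assum coeff})--(\ref{assum data}) are used quantitatively. Inserting $\|h_{j,\varepsilon}\|_{L^\infty}\le\|h_{j,\varepsilon}\|_{W^{1,\infty}}\lesssim\varepsilon^{-N_0}$, $\|u_{0,\varepsilon}\|_{H^1}\le\|u_{0,\varepsilon}\|_{H^2}\lesssim\varepsilon^{-N_1}$ and $\|u_{1,\varepsilon}\|_{L^2}\le\|u_{1,\varepsilon}\|_{H^1}\lesssim\varepsilon^{-N_2}$ into (\ref{Energy estimate}) yields
\[
\|u_\varepsilon(t,\cdot)\|_{L^2}+\|\partial_t u_\varepsilon(t,\cdot)\|_{L^2}+\sum_{j=1}^{d}\|\partial_{x_j}u_\varepsilon(t,\cdot)\|_{L^2}\lesssim \varepsilon^{-N_0/2-\max\{N_1,N_2\}},
\]
uniformly for $t\in[0,T]$. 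Substituting the analogous $\varepsilon$-bounds into (\ref{Energy estimate 1}) produces a similar power-of-$\varepsilon$ control of $\|\Delta u_\varepsilon(t,\cdot)\|_{L^2}$. Combining the two estimates with the triangle inequality $\|\sum_{j}\partial_{x_j}u_\varepsilon\|_{L^2}\le\sum_j\|\partial_{x_j}u_\varepsilon\|_{L^2}$ and with the definition of the norm $\|\cdot\|$ given in Section 2, one obtains $\|u_\varepsilon(t,\cdot)\|\lesssim\varepsilon^{-N}$ for some $N\in\mathbb N_0$, uniformly on $[0,T]$. Together with the $W^{1,\infty}$-, $H^2$- and $H^1$-moderateness of the regularisations of $h_j$, $u_0$, $u_1$ built into (\ref{assum coeff})--(\ref{assum data}), this is exactly what the definition of very weak solution requires.

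\textbf{Main obstacle.}
The subtlest ingredient is the bound on $\|\Delta u_\varepsilon(t,\cdot)\|_{L^2}$: estimate (\ref{Energy estimate 1}) was proved in Theorem \ref{lem 1} by means of Lemma \ref{L1}, whose argument uses the scalar hypothesis $h_j=h$ for all $j$. In the physically motivated scalar case, which is the focus of the paper, this causes no difficulty and the scheme above is complete. For a genuinely vector-valued $\boldsymbol h$ one would have to bypass Lemma \ref{L1} and estimate each $\|\partial_{x_j}^2u_\varepsilon\|_{L^2}$ individually, using the uniform lower bound $h_{j,\varepsilon}\ge c_0$ together with the identity (\ref{Equation Delta u}) applied termwise in $j$. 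Once this single point is handled, every other step reduces to routine bookkeeping of $\varepsilon$-powers in the estimates already delivered by Theorem \ref{lem 1}.
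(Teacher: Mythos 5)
Your proposal is correct and follows essentially the same route as the paper: solve the regularised problems via Theorem \ref{lem 1} and feed the moderateness assumptions (\ref{assum coeff})--(\ref{assum data}) into the energy estimates (\ref{Energy estimate}) and (\ref{Energy estimate 1}) to conclude that $(u_\varepsilon)_\varepsilon$ is $C^1$-moderate. Your remark that the $\Vert\Delta u_\varepsilon\Vert_{L^2}$ bound relies on the scalar hypothesis $h_j=h$ of Lemma \ref{L1} is a fair observation about a point the paper's own (very brief) proof passes over silently, and your suggested termwise fix via (\ref{Equation Delta u}) is reasonable.
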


\begin{proof}
The nets $(h_{i,\varepsilon})_{\varepsilon}$, for $i=1,...,d$ and $(u_{0,\varepsilon})_{\varepsilon}$, $(u_{1,\varepsilon})_{\varepsilon}$ are moderate by assumption. To prove the existence of a very weak solution, it remains to prove that the net $(u_{\varepsilon})_{\varepsilon}$, solution to the regularised Cauchy problem (\ref{Equation regularized}), is $C^1$-moderate. Using the estimates (\ref{Energy estimate}), (\ref{Energy estimate 1}) and the moderateness assumptions (\ref{assum coeff}) and (\ref{assum data}), we arrive at
\begin{equation*}
    \Vert u_{\varepsilon}(t,\cdot)\Vert \lesssim \varepsilon^{-2N_0 - \max \{N_1, N_2\}},
\end{equation*}
for all $t\in[0,T]$. This concludes the proof.
\end{proof}

In the next sections, we want to prove uniqueness of the very weak solution to the Cauchy problem (\ref{Equation singular}) and its consistency with the classical solution when the latter exists.

\subsection{Uniqueness}
Let us assume that we are in the case when very weak solutions to the Cauchy problem (\ref{Equation singular}) exist.
\begin{defn}[Uniqueness]
We say that the Cauchy problem (\ref{Equation singular}), has a unique very weak solution, if for all families of regularisations $(h_{i,\varepsilon})_{\varepsilon}$, $(\Tilde{h}_{i\varepsilon})_{\varepsilon}$, $(u_{0,\varepsilon})_{\varepsilon}$, $(\Tilde{u}_{0,\varepsilon})_{\varepsilon}$ and $(u_{1,\varepsilon})_{\varepsilon}$, $(\Tilde{u}_{1,\varepsilon})_{\varepsilon}$ of the coefficients $h_i$, for $i=1,...d$ and the Cauchy data $u_0$, $u_1$, satisfying
\begin{equation*}
    \Vert h_{i,\varepsilon}-\Tilde{h}_{i,\varepsilon}\Vert_{W^{1,\infty}}\leq C_{k}\varepsilon^{k} \text{\,\,for all\,\,} k>0,
\end{equation*}
\begin{equation*}
    \Vert u_{0,\varepsilon}-\Tilde{u}_{0,\varepsilon}\Vert_{H^{1}}\leq C_{m}\varepsilon^{m} \text{\,\,for all\,\,} m>0
\end{equation*}
and
\begin{equation*}
    \Vert u_{1,\varepsilon}-\Tilde{u}_{1,\varepsilon}\Vert_{L^{2}}\leq C_{n}\varepsilon^{n} \text{\,\,for all\,\,} n>0,
\end{equation*}
we have
\begin{equation*}
    \Vert u_{\varepsilon}(t,\cdot)-\Tilde{u}_{\varepsilon}(t,\cdot)\Vert_{L^{2}} \leq C_{N}\varepsilon^{N}
\end{equation*}
for all $N>0$,
where $(u_{\varepsilon})_{\varepsilon}$ and $(\Tilde{u}_{\varepsilon})_{\varepsilon}$ are the families of solutions to the related regularised Cauchy problems.
\end{defn}

\begin{thm}[Uniqueness] \label{thm uniqueness}
Let $T>0$. Suppose that $h_i(x)=h(x)$ for all $i=1, \dots, d$. Assume that for $i=1,...d$, the regularisations of the coefficients $h_i$ and the regularisations of the Cauchy data $u_0$ and $u_1$ satisfy the assumptions (\ref{assum coeff}) and (\ref{assum data}). Then, the very weak solution to the Cauchy problem (\ref{Equation singular}) is unique.
\end{thm}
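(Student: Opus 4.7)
The plan is to reduce uniqueness to an inhomogeneous energy estimate for the difference $U_\varepsilon := u_\varepsilon - \tilde u_\varepsilon$. Subtracting the two regularised equations, $U_\varepsilon$ satisfies
\begin{equation*}
\partial_t^2 U_\varepsilon - \sum_{j=1}^d \partial_{x_j}\bigl(h_{j,\varepsilon}\,\partial_{x_j}U_\varepsilon\bigr) = f_\varepsilon,\qquad f_\varepsilon := \sum_{j=1}^d \partial_{x_j}\bigl((h_{j,\varepsilon}-\tilde h_{j,\varepsilon})\,\partial_{x_j}\tilde u_\varepsilon\bigr),
\end{equation*}
with initial data $U_\varepsilon(0,\cdot)=u_{0,\varepsilon}-\tilde u_{0,\varepsilon}$ and $\partial_t U_\varepsilon(0,\cdot)=u_{1,\varepsilon}-\tilde u_{1,\varepsilon}$.

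First I would repeat the energy computation from the proof of Theorem \ref{lem 1} on this inhomogeneous equation: multiply by $\overline{\partial_t U_\varepsilon}$, take the real part, integrate over $\mathbb{R}^d$, and use that $h_{j,\varepsilon}$ is $t$-independent, to obtain
\begin{equation*}
\partial_t E_\varepsilon(t) = 2\,\Re\langle f_\varepsilon(t,\cdot),\partial_t U_\varepsilon(t,\cdot)\rangle_{L^2},\qquad E_\varepsilon(t):=\Vert\partial_t U_\varepsilon(t,\cdot)\Vert_{L^2}^2 + \sum_{j=1}^d\Vert h_{j,\varepsilon}^{1/2}\partial_{x_j}U_\varepsilon(t,\cdot)\Vert_{L^2}^2.
\end{equation*}
Cauchy--Schwarz then yields $(E_\varepsilon^{1/2})'(t)\le \Vert f_\varepsilon(t,\cdot)\Vert_{L^2}$, hence
\begin{equation*}
E_\varepsilon(t)^{1/2} \le E_\varepsilon(0)^{1/2} + \int_0^t \Vert f_\varepsilon(s,\cdot)\Vert_{L^2}\,ds.
\end{equation*}

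Next I would show that both terms on the right are $O(\varepsilon^N)$ for every $N>0$. Distributing the $x_j$-derivative in $f_\varepsilon$ gives
\begin{equation*}
\Vert f_\varepsilon(t,\cdot)\Vert_{L^2} \lesssim \sum_{j=1}^d \Vert h_{j,\varepsilon}-\tilde h_{j,\varepsilon}\Vert_{W^{1,\infty}}\,\Vert \tilde u_\varepsilon(t,\cdot)\Vert_{H^2},
\end{equation*}
where the first factor is negligible by hypothesis, while the second is only polynomially large in $\varepsilon^{-1}$: since $h_j\equiv h$, the estimates \eqref{Energy estimate} and \eqref{Energy estimate 1} applied to the regularised solution $\tilde u_\varepsilon$, combined with the moderateness assumptions \eqref{assum coeff}--\eqref{assum data}, give $\Vert \tilde u_\varepsilon(t,\cdot)\Vert_{H^2}\lesssim \varepsilon^{-M}$ for some fixed $M$. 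Hence $\Vert f_\varepsilon(t,\cdot)\Vert_{L^2}$ is $O(\varepsilon^N)$ for every $N>0$. In the same spirit, $E_\varepsilon(0)^{1/2}$ is negligible: $\Vert\partial_t U_\varepsilon(0,\cdot)\Vert_{L^2}$ is negligible by hypothesis, and $\Vert h_{j,\varepsilon}^{1/2}\partial_{x_j}(u_{0,\varepsilon}-\tilde u_{0,\varepsilon})\Vert_{L^2}^2 \le \Vert h_{j,\varepsilon}\Vert_{L^\infty}\,\Vert u_{0,\varepsilon}-\tilde u_{0,\varepsilon}\Vert_{H^1}^2$ is negligible because $\Vert h_{j,\varepsilon}\Vert_{L^\infty}$ is only polynomially large. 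Combining these bounds, $\Vert \partial_t U_\varepsilon(t,\cdot)\Vert_{L^2} \le E_\varepsilon(t)^{1/2}$ is $O(\varepsilon^N)$ for every $N>0$, and the fundamental theorem of calculus $U_\varepsilon(t,\cdot) = U_\varepsilon(0,\cdot) + \int_0^t \partial_t U_\varepsilon(s,\cdot)\,ds$ together with the negligibility of $\Vert U_\varepsilon(0,\cdot)\Vert_{L^2}$ transfers the rapid decay to $\Vert U_\varepsilon(t,\cdot)\Vert_{L^2}$, as required by the definition of uniqueness.

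The main obstacle I anticipate is securing a uniform moderate bound on $\Vert \tilde u_\varepsilon(t,\cdot)\Vert_{H^2}$, since a negligible factor multiplied by something worse than polynomially large in $\varepsilon^{-1}$ need no longer be negligible. This is precisely why the statement imposes $h_j\equiv h$: the Laplacian estimate \eqref{Energy estimate 1} of Theorem \ref{lem 1} is only proved in that scalar case, and without it the $H^2$ control on $\tilde u_\varepsilon$, and hence the negligibility of $f_\varepsilon$, would fail and the energy argument would collapse.
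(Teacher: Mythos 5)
Your proposal is correct and follows the same overall strategy as the paper: form the difference $U_\varepsilon=u_\varepsilon-\tilde u_\varepsilon$, observe that it solves an inhomogeneous version of the regularised equation with source $f_\varepsilon$ built from $h_{j,\varepsilon}-\tilde h_{j,\varepsilon}$, and then exploit the negligibility of the coefficient and data differences together with the moderateness of the very weak solution (in particular its $H^2$-type bound, which is where the hypothesis $h_j\equiv h$ and the estimate \eqref{Energy estimate 1} enter) to conclude that $\Vert U_\varepsilon(t,\cdot)\Vert_{L^2}=O(\varepsilon^N)$ for every $N$. The only real difference is technical: the paper handles the source term via Duhamel's principle, splitting $U_\varepsilon$ into a homogeneous part $V_\varepsilon$ and an integral of auxiliary homogeneous solutions $W_\varepsilon(\cdot,t-s;s)$ with data $f_\varepsilon(s,\cdot)$, and then applies the homogeneous energy estimate \eqref{Energy estimate} to each piece; you instead derive the inhomogeneous energy identity $\partial_t E_\varepsilon=2\Re\langle f_\varepsilon,\partial_t U_\varepsilon\rangle$ directly and integrate the resulting differential inequality. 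Your route is slightly more self-contained (no appeal to Duhamel, and no need to track the parameter $s$ in the family $W_\varepsilon$), while the paper's route reuses Theorem \ref{lem 1} as a black box; both reduce to exactly the same estimate of $\Vert f_\varepsilon(s,\cdot)\Vert_{L^2}$ by the product rule, and both transfer the decay from $\partial_t U_\varepsilon$ and $\nabla U_\varepsilon$ to $U_\varepsilon$ itself in the same way. Your closing remark correctly identifies the role of the scalar assumption $h_j\equiv h$, which the paper only addresses implicitly (and in a following remark).
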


\begin{proof}
Let $(h_{i,\varepsilon}, u_{0,\varepsilon}, u_{1,\varepsilon})_{\varepsilon}$, $(\Tilde{h}_{i\varepsilon}, \Tilde{u}_{0,\varepsilon}, \Tilde{u}_{1,\varepsilon})_{\varepsilon}$ be regularisations of the coefficients $h_i$, for $i=1,...d$ and the Cauchy data $u_0$, $u_1$, and let assume that they satisfy
\begin{equation*}
    \Vert h_{i,\varepsilon}-\Tilde{h}_{i,\varepsilon}\Vert_{W^{1,\infty}}\leq C_{k}\varepsilon^{k} \text{\,\,for all\,\,} k>0,
\end{equation*}
\begin{equation*}
    \Vert u_{0,\varepsilon}-\Tilde{u}_{0,\varepsilon}\Vert_{H^{1}}\leq C_{m}\varepsilon^{m} \text{\,\,for all\,\,} m>0,
\end{equation*}
and
\begin{equation*}
    \Vert u_{1,\varepsilon}-\Tilde{u}_{1,\varepsilon}\Vert_{L^{2}}\leq C_{n}\varepsilon^{n} \text{\,\,for all\,\,} n>0.
\end{equation*}
Let us denote by $U_{\varepsilon}(t,x):=u_{\varepsilon}(t,x)-\Tilde{u}_{\varepsilon}(t,x)$, where $(u_{\varepsilon})_{\varepsilon}$ and $(\Tilde{u}_{\varepsilon})_{\varepsilon}$ are the solutions to the families of regularised Cauchy problems, related to the families $(h_{i,\varepsilon}, u_{0,\varepsilon}, u_{1,\varepsilon})_{\varepsilon}$ and $(\Tilde{h}_{i\varepsilon}, \Tilde{u}_{0,\varepsilon}, \Tilde{u}_{1,\varepsilon})_{\varepsilon}$. Easy calculations show that $U_{\varepsilon}$ solves the Cauchy problem
\begin{equation}
    \left\lbrace
    \begin{array}{l}
    \partial_{t}^{2}U_{\varepsilon}(t,x) - \sum_{j=1}^{d}\partial_{x_{j}}\left(\Tilde{h}_{j,\varepsilon}(x)\partial_{x_{j}}U_{\varepsilon}(t,x)\right) = f_{\varepsilon}(t,x), \,\,\,(t,x)\in\left[0,T\right]\times \mathbb{R}^{d},\\
    U_{\varepsilon}(0,x)=(u_{0,\varepsilon}-\Tilde{u}_{0,\varepsilon})(x), \,\,\, \partial_{t}U_{\varepsilon}(0,x)=(u_{1,\varepsilon}-\Tilde{u}_{1,\varepsilon})(x), \,\,\, x\in\mathbb{R}^{d},
    \end{array}
    \right.
\end{equation}
where
\begin{equation}
    f_{\varepsilon}(t,x) = \sum_{j=1}^{d}\partial_{x_{j}}\left[\left(h_{j,\varepsilon}(x)-\Tilde{h}_{j,\varepsilon}(x)\right)\partial_{x_{j}}u_{\varepsilon}(t,x)\right].
\end{equation}
By Duhamel's principle (see, e.g. \cite{ER18}), we obtain the following representation
\begin{equation}
    U_{\varepsilon}(t, x)=V_{\varepsilon}(t, x) + \int_{0}^{t}W_{\varepsilon}(x,t-s;s)ds, \label{Representation U_eps}
\end{equation}
for $U_{\varepsilon}$, where $V_{\varepsilon}(t, x)$ is the solution to the homogeneous problem
\begin{equation}
    \left\lbrace
    \begin{array}{l}
    \partial_{t}^{2}V_{\varepsilon}(t,x) - \sum_{j=1}^{d}\partial_{x_{j}}\left(\Tilde{h}_{j,\varepsilon}(x)\partial_{x_{j}}V_{\varepsilon}(t,x)\right) = 0, \,\,\,(t,x)\in\left[0,T\right]\times \mathbb{R}^{d},\\
    V_{\varepsilon}(0,x)=(u_{0,\varepsilon}-\Tilde{u}_{0,\varepsilon})(x), \,\,\, \partial_{t}V_{\varepsilon}(0,x)=(u_{1,\varepsilon}-\Tilde{u}_{1,\varepsilon})(x), \,\,\, x\in\mathbb{R}^{d},
    \end{array}
    \right.
\end{equation}
and $W_{\varepsilon}(x,t;s)$ solves
\begin{equation}
\left\lbrace
    \begin{array}{l}
    \partial_{t}^{2}W_{\varepsilon}(x, t; s) - \sum_{j=1}^{d}\partial_{x_{j}}\left(\Tilde{h}_{j,\varepsilon}(x)\partial_{x_{j}}W_{\varepsilon}(x, t; s)\right) = 0, \,\,\,(t,x)\in\left[0,T\right]\times \mathbb{R}^{d},\\
    W_{\varepsilon}(x, 0; s)=0, \,\,\, \partial_{t}W_{\varepsilon}(x, 0; s)=f_{\varepsilon}(s,x), \,\,\, x\in\mathbb{R}^{d}.
    \end{array}
    \right.
\end{equation}
Taking the $L^2$ norm on both sides in (\ref{Representation U_eps}) and using (\ref{Energy estimate}) to estimate $V_{\varepsilon}$ and $W_{\varepsilon}$, we obtain
\begin{align}
    \Vert U_{\varepsilon}(\cdot,t)\Vert_{L^2} & \leq \Vert V_{\varepsilon}(\cdot,t)\Vert_{L^2} + \int_{0}^{T}\Vert W_{\varepsilon}(\cdot,t-s;s)\Vert_{L^2} ds \nonumber\\
    & \lesssim \left(1 + \sum_{j=1}^{d}\Vert \Tilde{h}_{j,\varepsilon}\Vert_{L^{\infty}}^{\frac{1}{2}}\right) \left[ \Vert u_{0,\varepsilon}-\Tilde{u}_{0,\varepsilon}\Vert_{H^1} + \Vert u_{1,\varepsilon}-\Tilde{u}_{1,\varepsilon}\Vert_{L^2} + \int_{0}^{T}\Vert f_{\varepsilon}(s,\cdot)\Vert_{L^2} ds\right]. \label{Estimate U_eps}
\end{align}
Let us estimate $\Vert f_{\varepsilon}(s,\cdot)\Vert_{L^2}$. We have
\begin{align*}
    \Vert f_{\varepsilon}(s,\cdot)\Vert_{L^2} & \leq \sum_{j=1}^{d}\Vert\partial_{x_{j}}\left[\left(h_{j,\varepsilon}(\cdot)-\Tilde{h}_{j,\varepsilon}(\cdot)\right)\partial_{x_{j}}u_{\varepsilon}(s,\cdot)\right]\Vert_{L^2}\\
    & \leq \sum_{j=1}^{d}\left[ \Vert\partial_{x_{j}} h_{j,\varepsilon}-\partial_{x_{j}}\Tilde{h}_{j,\varepsilon}\Vert_{L^{\infty}} \Vert\partial_{x_j}u_{\varepsilon}\Vert_{L^2} + \Vert h_{j,\varepsilon}-\Tilde{h}_{j,\varepsilon}\Vert_{L^{\infty}} \Vert\partial_{x_j}^{2}u_{\varepsilon}\Vert_{L^2}\right].
\end{align*}
In the last inequality, we used the product rule for derivatives and the fact that $\Vert\partial_{x_{j}}\left(h_{j,\varepsilon}-\Tilde{h}_{j,\varepsilon}\right)\partial_{x_j}u_{\varepsilon}\Vert_{L^2}$ and $\Vert\left(h_{j,\varepsilon}-\Tilde{h}_{j,\varepsilon}\right)\partial_{x_j}^{2}u_{\varepsilon}\Vert_{L^2}$ can be estimated by $\Vert\partial_{x_{j}} h_{j,\varepsilon}-\partial_{x_{j}}\Tilde{h}_{j,\varepsilon}\Vert_{L^{\infty}} \Vert\partial_{x_j}u_{\varepsilon}\Vert_{L^2}$ and $\Vert h_{j,\varepsilon}-\Tilde{h}_{j,\varepsilon}\Vert_{L^{\infty}} \Vert\partial_{x_j}^{2}u_{\varepsilon}\Vert_{L^2}$, respectively. We have by assumption that for all $i=1,...,d$, the net $(\Tilde{h}_{i,\varepsilon})_{\varepsilon}$ is moderate. The net $(u_{\varepsilon})_{\varepsilon}$ is also moderate as a very weak solution. Thus, there exists $N\in \mathbb{N}$ such that
\begin{equation}
    \sum_{j=1}^{d}\Vert \Tilde{h}_{j,\varepsilon}\Vert_{L^{\infty}}^{\frac{1}{2}} \lesssim \varepsilon^{-N},
\end{equation}
\begin{equation}
    \sum_{j=1}^{d}\Vert\partial_{x_j}u_{\varepsilon}\Vert_{L^2} \lesssim \varepsilon^{-N}\,\,\text{and}\,\,\Vert\Delta u_{\varepsilon}\Vert_{L^2} \lesssim \varepsilon^{-N}.
\end{equation}
On the other hand, we have that
\begin{equation*}
    \text{For}\,i=1,...,d,\,\,\Vert h_{i,\varepsilon}-\Tilde{h}_{i,\varepsilon}\Vert_{W^{1,\infty}}\leq C_{k}\varepsilon^{k} \text{\,\,for all\,\,} k>0,
\end{equation*}
\begin{equation*}
    \Vert u_{0,\varepsilon}-\Tilde{u}_{0,\varepsilon}\Vert_{H^{1}}\leq C_{m}\varepsilon^{m} \text{\,\,for all\,\,} m>0,
\end{equation*}
and
\begin{equation*}
    \Vert u_{1,\varepsilon}-\Tilde{u}_{1,\varepsilon}\Vert_{L^{2}}\leq C_{n}\varepsilon^{n} \text{\,\,for all\,\,} n>0.
\end{equation*}
It follows that
\begin{equation}
    \Vert U_{\varepsilon}(\cdot,t)\Vert_{L^2} \lesssim \varepsilon^l,
\end{equation}
for all $l\in \mathbb{N}$.
\end{proof}

\begin{rem}
The assumption that $h_i(x)=h(x)$ for all $i=1, \dots, d$, in Theorem \ref{thm uniqueness} can be removed if we know that the solution $u(t, x)$ of the problem \eqref{Equation singular} is from the class of distributions, that is, $u(t, \cdot)\in \mathcal{E}'(\mathbb{R}^{d})$ for all $t\in[0, T]$.
\end{rem}

\subsection{Consistency}
Now, we want to prove the consistency of the very weak solution with the classical one, when the latter exists, which means that, when the coefficients and the Cauchy data are regular enough, the very weak solution converges to the classical one in an appropriate norm.

\begin{thm}[Consistency] \label{thm consistency}
Let ${\boldsymbol{h}}\in \left[W^{1,\infty}\left({\mathbb{R}^d}\right)\right]^{d}$ be positive. Assume that $u_0 \in H^{2}(\mathbb{R}^d)$ and $u_1 \in H^{1}(\mathbb{R}^d)$, and let us consider the Cauchy problem
\begin{equation}
    \left\lbrace
    \begin{array}{l}
    u_{tt}(t,x) - \sum_{j=1}^{d}\partial_{x_{j}}\left(h_{j}(x)\partial_{x_{j}}u(t,x)\right)=0, \,\,\,(t,x)\in\left[0,T\right]\times \mathbb{R}^{d},\\
    u(0,x)=u_{0}(x), \,\,\, u_{t}(0,x)=u_{1}(x), \,\,\, x\in\mathbb{R}^{d}. \label{Equation consistency}
    \end{array}
    \right.
\end{equation}
Let $(u_{\varepsilon})_{\varepsilon}$ be a very weak solution of (\ref{Equation consistency}). Then, for any regularising families $h_{j,\varepsilon}=h_{j}\ast\psi_{1, \varepsilon}$ with $j=1,...d$, $u_{0,\varepsilon}=u_{0}\ast\psi_{2, \varepsilon}$ and $u_{1,\varepsilon}=u_{1}\ast\psi_{3, \varepsilon}$ for any $\psi_{k}\in C_{0}^{\infty}$, $\psi_{k}\geq 0$, $\int\psi_{k} =1$, $k=1, 2, 3,$ the net $(u_{\varepsilon})_{\varepsilon}$ converges to the classical solution of the Cauchy problem (\ref{Equation consistency}) in $L^{2}$ as $\varepsilon \rightarrow 0$.
\end{thm}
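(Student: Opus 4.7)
The plan is to compare the very weak solution $u_\varepsilon$ (which, by definition, is the classical solution of the regularised problem with coefficients $h_{j,\varepsilon}$ and data $u_{0,\varepsilon}, u_{1,\varepsilon}$) with the classical solution $u$ of \eqref{Equation consistency} through the difference $V_\varepsilon := u_\varepsilon - u$. Subtracting the two Cauchy problems, $V_\varepsilon$ solves
\begin{equation*}
\partial_t^2 V_\varepsilon - \sum_{j=1}^{d} \partial_{x_j}(h_{j,\varepsilon}\,\partial_{x_j} V_\varepsilon) = f_\varepsilon, \quad V_\varepsilon(0,\cdot) = u_{0,\varepsilon}-u_0, \quad \partial_t V_\varepsilon(0,\cdot) = u_{1,\varepsilon}-u_1,
\end{equation*}
with source $f_\varepsilon := \sum_{j=1}^{d} \partial_{x_j}\bigl((h_{j,\varepsilon}-h_j)\partial_{x_j} u\bigr)$.

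Following the uniqueness argument almost verbatim, I would invoke Duhamel's principle for this inhomogeneous equation and apply the energy estimate \eqref{Energy estimate} for the regularised operator (with coefficients $h_{j,\varepsilon}$) to obtain
\begin{equation*}
\|V_\varepsilon(t,\cdot)\|_{L^2} \lesssim \Bigl(1+\sum_{j=1}^{d}\|h_{j,\varepsilon}\|_{L^\infty}^{1/2}\Bigr)\Bigl[\|u_{0,\varepsilon}-u_0\|_{H^1} + \|u_{1,\varepsilon}-u_1\|_{L^2} + \int_0^T \|f_\varepsilon(s,\cdot)\|_{L^2}\,ds\Bigr].
\end{equation*}
The prefactor is uniformly bounded in $\varepsilon$ since $\|h_{j,\varepsilon}\|_{L^\infty}\leq \|h_j\|_{L^\infty}$, so the task reduces to showing that each of the three bracketed terms tends to $0$. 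The first two follow from classical mollifier approximation: $u_0\in H^2$ gives $u_{0,\varepsilon}\to u_0$ in $H^1$, and $u_1\in H^1$ gives $u_{1,\varepsilon}\to u_1$ in $L^2$.

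The main obstacle is the source-term integral. Using the product rule,
\begin{equation*}
f_\varepsilon = \sum_{j=1}^{d}\bigl[(\partial_{x_j} h_{j,\varepsilon}-\partial_{x_j} h_j)\partial_{x_j} u + (h_{j,\varepsilon}-h_j)\partial_{x_j}^2 u\bigr].
\end{equation*}
For the second summand, $h_j\in W^{1,\infty}$ is Lipschitz, hence uniformly continuous on $\mathbb{R}^d$, so $\|h_{j,\varepsilon}-h_j\|_{L^\infty}\to 0$; paired with $\partial_{x_j}^2 u(s,\cdot)\in L^2$, it converges to $0$ in $L^2$. For the first summand one cannot expect $L^\infty$ convergence of $\partial_{x_j} h_{j,\varepsilon}$ in general, but $|\partial_{x_j} h_{j,\varepsilon}|\leq \|\partial_{x_j} h_j\|_{L^\infty}$ uniformly, $\partial_{x_j} h_{j,\varepsilon}\to \partial_{x_j} h_j$ a.e.\ at Lebesgue points, and $\partial_{x_j} u(s,\cdot)\in L^2$; so Lebesgue's dominated convergence theorem applied to $|(\partial_{x_j} h_{j,\varepsilon}-\partial_{x_j} h_j)\partial_{x_j} u|^2$ with dominating function $4\|\partial_{x_j} h_j\|_{L^\infty}^2 |\partial_{x_j} u|^2$ forces its $L^2$ norm to $0$. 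The energy estimates also provide a bound on $\|f_\varepsilon(s,\cdot)\|_{L^2}$ uniform in $s$ and $\varepsilon$, so a further application of dominated convergence transfers this pointwise-in-$s$ convergence to the time integral.

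The truly delicate point is guaranteeing that $\partial_{x_j}^2 u \in L^2$ in the first place: under the homogeneous-depth hypothesis $h_j=h$ this is precisely the $H^2$-regularity proved in the second part of Theorem \ref{lem 1}; in the general case one rewrites the equation as $\sum_{j} h_j\,\partial_{x_j}^2 u = u_{tt} - \sum_{j} (\partial_{x_j} h_j)\,\partial_{x_j} u$ and combines the $L^2$-bound on the right-hand side, supplied by \eqref{Estimate u_tt} and \eqref{Estimate u_x_j}, with the uniform ellipticity $h_j\geq c_0>0$ exactly as in Lemma \ref{L1} to extract each $\partial_{x_j}^2 u \in L^2$. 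Assembling the three convergences in the bracket then yields $\|V_\varepsilon(t,\cdot)\|_{L^2}\to 0$ uniformly in $t\in[0,T]$, which is the asserted consistency.
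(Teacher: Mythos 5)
Your proposal follows the same route as the paper: set $V_\varepsilon:=u_\varepsilon-u$, observe that it solves the regularised equation with source $\sum_{j}\partial_{x_j}\bigl[(h_{j,\varepsilon}-h_j)\partial_{x_j}u\bigr]$ and mollified data differences, apply Duhamel's principle together with the energy estimate \eqref{Energy estimate}, and send the three bracketed terms to zero. Where you differ is in the treatment of the gradient term of the source: the paper simply asserts that $\Vert h_{j,\varepsilon}-h_j\Vert_{W^{1,\infty}}\to 0$, which is not true in general, since for $h_j\in W^{1,\infty}$ the derivative $\partial_{x_j}h_j$ is merely in $L^{\infty}$ and need not be uniformly continuous, so its mollifications need not converge in $L^{\infty}$. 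Your replacement --- uniform boundedness of $\partial_{x_j}h_{j,\varepsilon}$, a.e.\ convergence at Lebesgue points, and dominated convergence applied to the product with $\partial_{x_j}u\in L^2$, followed by a second dominated-convergence pass in $s$ --- is the correct repair, and is the genuinely valuable addition here. One caveat: your sketch for obtaining $\partial_{x_j}^2u\in L^2$ in the case of unequal coefficients $h_j$ is too quick, since controlling $\Vert\Delta u\Vert_{L^2}$ by $\Vert\sum_j h_j\partial_{x_j}^2u\Vert_{L^2}$ as in Lemma \ref{L1} really uses $h_j=h$ to factor the coefficient out of the sum; but this gap is inherited from the paper itself, whose regularity statement \eqref{Energy estimate 1} is likewise proved only under that hypothesis.
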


\begin{proof}
Let $u$ be the classical solution. It solves
\begin{equation*}
    \left\lbrace
    \begin{array}{l}
    u_{tt}(t,x) - \sum_{j=1}^{d}\partial_{x_{j}}\left(h_{j}(x)\partial_{x_{j}}u(t,x)\right)=0, \,\,\,(t,x)\in\left[0,T\right]\times \mathbb{R}^{d},\\
    u(0,x)=u_{0}(x), \,\,\, u_{t}(0,x)=u_{1}(x), \,\,\, x\in\mathbb{R}^{d},
    \end{array}
    \right.
\end{equation*}
and let $(u_{\varepsilon})_{\varepsilon}$ be the very weak solution. It solves
\begin{equation*}
    \left\lbrace
    \begin{array}{l}
    \partial_{t}^{2}u_{\varepsilon}(t,x) - \sum_{j=1}^{d}\partial_{x_{j}}\left(h_{j,\varepsilon}(x)\partial_{x_{j}}u_{\varepsilon}(t,x)\right)=0, \,\,\,(t,x)\in\left[0,T\right]\times \mathbb{R}^{d},\\
    u_{\varepsilon}(0,x)=u_{0,\varepsilon}(x), \,\,\, \partial_{t}u_{\varepsilon}(0,x)=u_{1,\varepsilon}(x), \,\,\, x\in\mathbb{R}^{d}.
    \end{array}
    \right.
\end{equation*}
Let us denote by $V_{\varepsilon}(t,x):=u_{\varepsilon}(t,x)-u(t,x)$. Then $V_{\varepsilon}$ solves the problem
\begin{equation*}
    \left\lbrace
    \begin{array}{l}
    \partial_{t}^{2}V_{\varepsilon}(t,x) - \sum_{j=1}^{d}\partial_{x_{j}}\left(h_{j,\varepsilon}(x)\partial_{x_{j}}V_{\varepsilon}(t,x)\right) = \beta_{\varepsilon}(t,x), \,\,\,(t,x)\in\left[0,T\right]\times \mathbb{R}^{d},\\
    V_{\varepsilon}(0,x)=(u_{0,\varepsilon}-u_0)(x), \,\,\, \partial_{t}V_{\varepsilon}(0,x)=(u_{1,\varepsilon}-u_1)(x), \,\,\, x\in\mathbb{R}^{d},
    \end{array}
    \right.
\end{equation*}
where
\begin{equation*}
    \beta_{\varepsilon}(t,x):= \sum_{j=1}^{d}\partial_{x_{j}}\left[\left(h_{j,\varepsilon}(x)-h_{j}(x)\right)\partial_{x_{j}}u(t,x)\right].
\end{equation*}
Once again, using Duhamel's principle and similar arguments as in Theorem \ref{thm consistency}, we arrive at
\begin{equation}
    \Vert V_{\varepsilon}(\cdot,t)\Vert_{L^2} \lesssim \left(1 + \sum_{j=1}^{d}\Vert h_{j,\varepsilon}\Vert_{L^{\infty}}^{\frac{1}{2}}\right) \left[ \Vert u_{0,\varepsilon}-u_0\Vert_{H^1} + \Vert u_{1,\varepsilon}-u_1\Vert_{L^2} + \int_{0}^{T}\Vert \beta_{\varepsilon}(s,\cdot)\Vert_{L^2} ds\right],
\end{equation}
where $\beta_{\varepsilon}$ is estimated by
\begin{equation}
    \Vert \beta_{\varepsilon}(s,\cdot)\Vert_{L^2} \leq \sum_{j=1}^{d}\left[ \Vert\partial_{x_{j}} h_{j,\varepsilon}-\partial_{x_{j}}h_{j}\Vert_{L^{\infty}} \Vert\partial_{x_j}u\Vert_{L^2} + \Vert h_{j,\varepsilon}-h_{j}\Vert_{L^{\infty}} \Vert\partial_{x_j}^{2}u\Vert_{L^2}\right].
\end{equation}
Since $\Vert h_{j,\varepsilon}-h_{j}\Vert_{W^{1,\infty}} \rightarrow 0$ as $\varepsilon \rightarrow 0$ and that $u$ is a classical solution, it follows that the right hand side in the last inequality tends to $0$ as $\varepsilon\rightarrow 0$. Thus
\begin{equation}
    \Vert \beta_{\varepsilon}(s,\cdot)\Vert_{L^2} \rightarrow 0\,\,\text{as}\,\varepsilon\rightarrow 0.
\end{equation}
From the other hand, for all $j=1,...,d$ the coefficients $h_{j,\varepsilon}$ are bounded since $\boldsymbol{h}\in \left[W^{1,\infty}(\mathbb{R}^d)\right]^{d}$ and we have that
\begin{equation}
    \Vert u_{0,\varepsilon}-u_0\Vert_{H^1}\rightarrow 0,
\end{equation}
and
\begin{equation}
    \Vert u_{1,\varepsilon}-u_1\Vert_{L^2}\rightarrow 0,
\end{equation}
as $\varepsilon$ tends to $0$. It follows that $(u_{\varepsilon})_{\varepsilon}$ converges to $u$ in $L^2$.
\end{proof}

\section{Numerical Experiments}

\begin{figure}[ht!]
\begin{minipage}[h]{0.45\linewidth}
\center{\includegraphics[scale=0.35]{./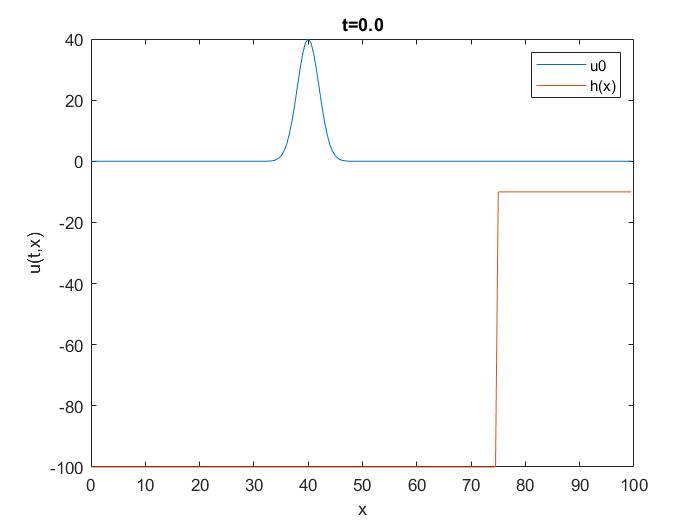}}
\end{minipage}
\hfill
\begin{minipage}[h]{0.45\linewidth}
\center{\includegraphics[scale=0.35]{./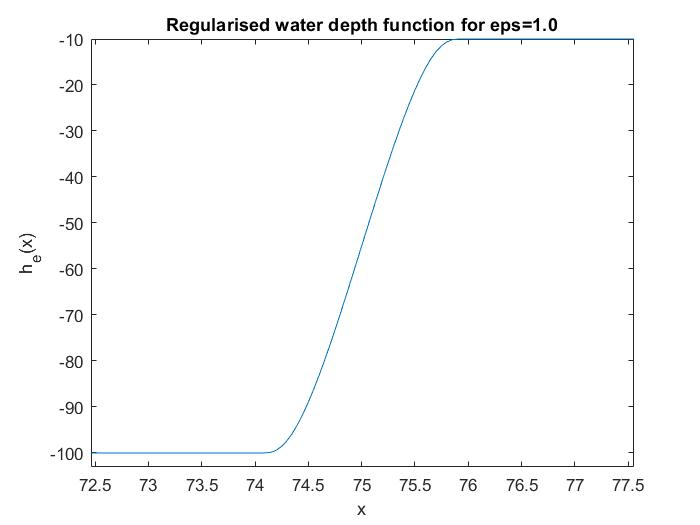}}
\end{minipage}
\caption{In the left plot, the graphics of the initial water level function $u_0(x)$ given by \eqref{u0} and of the water depth $-h_0(x)$ are drawn (coloured by blue and orange, respectively). Here, the shore is a place between $75<x\leq100$. In the right plot, for $\varepsilon=1.0$ the graphic of regularisation $h_{0, \varepsilon}(x)$ of the function $h_0(x)$ corresponding to Case 1 is given.}
\label{fig1}
\end{figure}

In this Section we carry out numerical experiments of the tsunami wave propagation in one- and two- dimensional cases. In particular, we analyse behaviours of the waves in singular topographies. Moreover, for 2D tsunami equation we develop a parallel computing algorithm to reduce the computational time. In particular, from the obtained simulations, we observe the appearance of a substantial reflected wave, travelling in the opposite direction from the point/line of singularity.

\subsection{1D case}

\begin{figure}[ht!]
\begin{minipage}[h]{0.25\linewidth}
\center{\includegraphics[scale=0.23]{./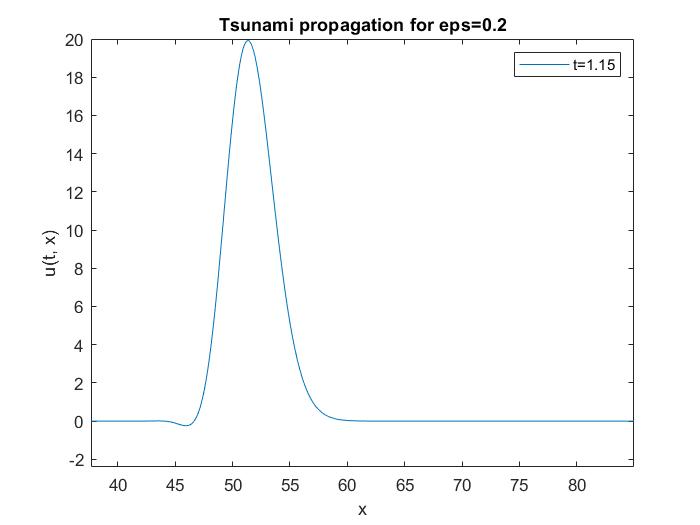}}
\end{minipage}
\hfill
\begin{minipage}[h]{0.25\linewidth}
\center{\includegraphics[scale=0.23]{./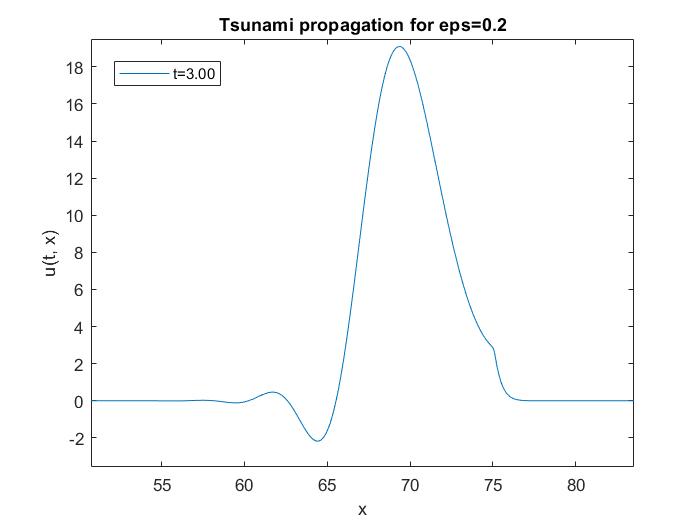}}
\end{minipage}
\hfill
\begin{minipage}[h]{0.25\linewidth}
\center{\includegraphics[scale=0.23]{./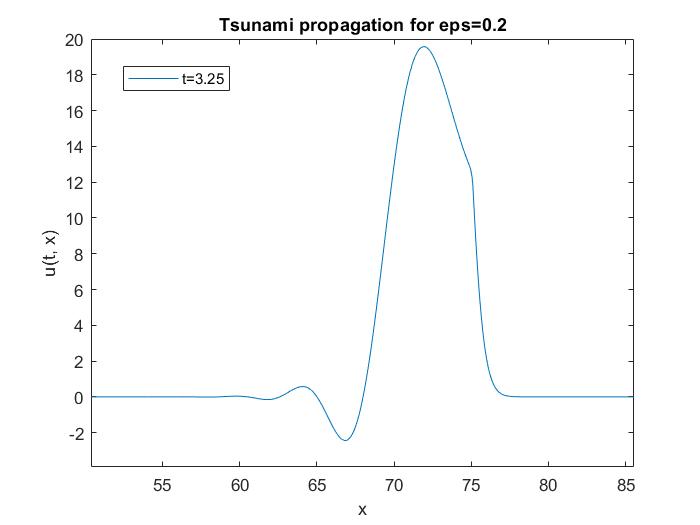}}
\end{minipage}
\hfill
\begin{minipage}[h]{0.25\linewidth}
\center{\includegraphics[scale=0.23]{./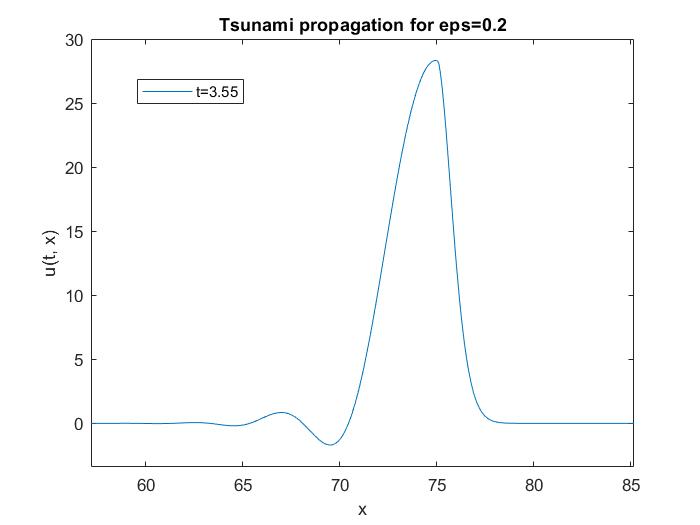}}
\end{minipage}
\hfill
\begin{minipage}[h]{0.25\linewidth}
\center{\includegraphics[scale=0.23]{./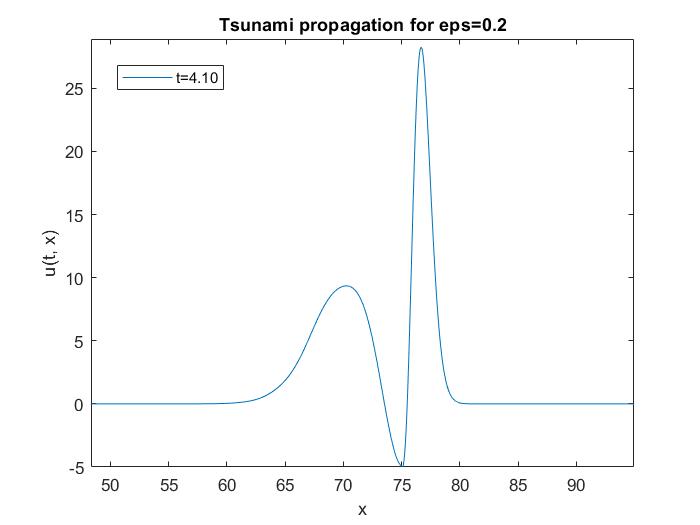}}
\end{minipage}
\hfill
\begin{minipage}[h]{0.25\linewidth}
\center{\includegraphics[scale=0.23]{./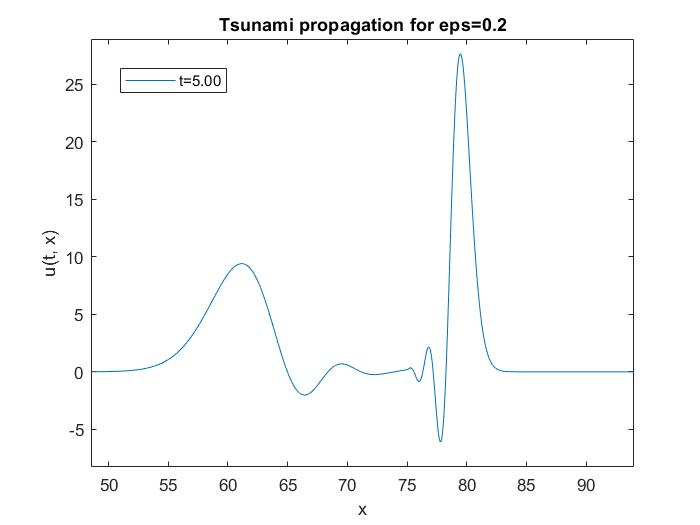}}
\end{minipage}
\caption{In these plots, an evolution of the solution of the regularised tsunami equation \eqref{num2} is given in Case 1 for $\varepsilon=0.2$ at $t=1.15, 3.00, 3.25, 3.55, 4.10, 5.00$.}
\label{fig2}
\end{figure}

\begin{figure}[ht!]
\begin{minipage}[h]{0.59\linewidth}
\center{\includegraphics[scale=0.45]{./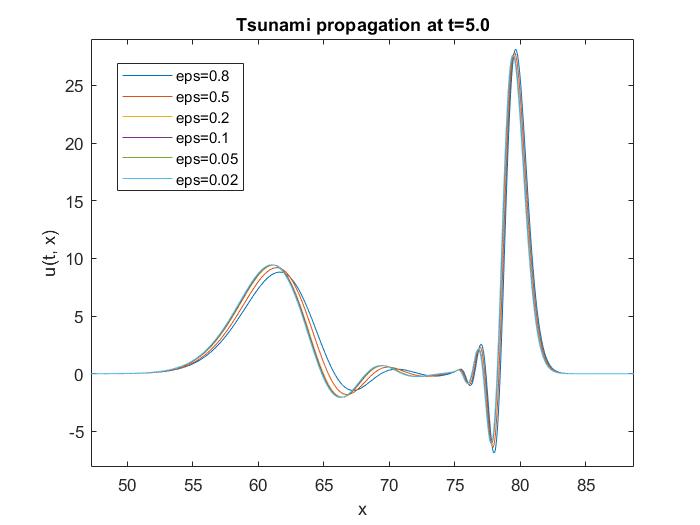}}
\end{minipage}
\caption{In this plot, the solution of the regularised tsunami equation \eqref{num2} is given in Case 1 at time $t=5.00$ for different values of the parameter $\varepsilon$, namely, for $\varepsilon=0.02, 0.05, 0.1, 0.2, 0.5, 0.8$.}
\label{fig3}
\end{figure}

\begin{figure}[ht!]
\begin{minipage}[h]{0.45\linewidth}
\center{\includegraphics[scale=0.32]{./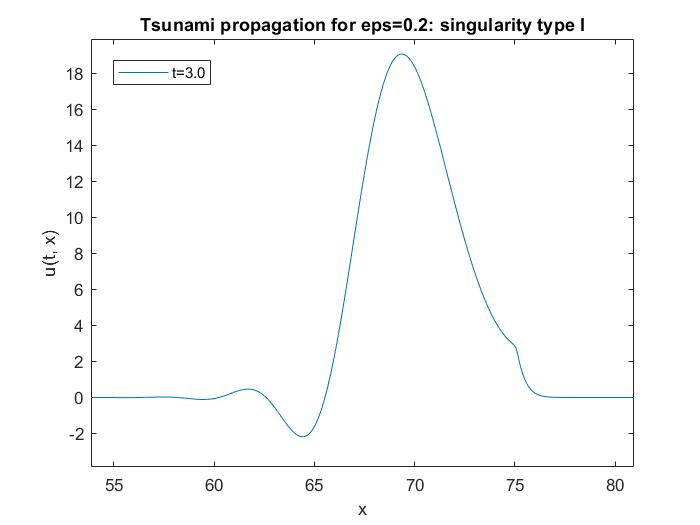}}
\end{minipage}
\hfill
\begin{minipage}[h]{0.45\linewidth}
\center{\includegraphics[scale=0.32]{./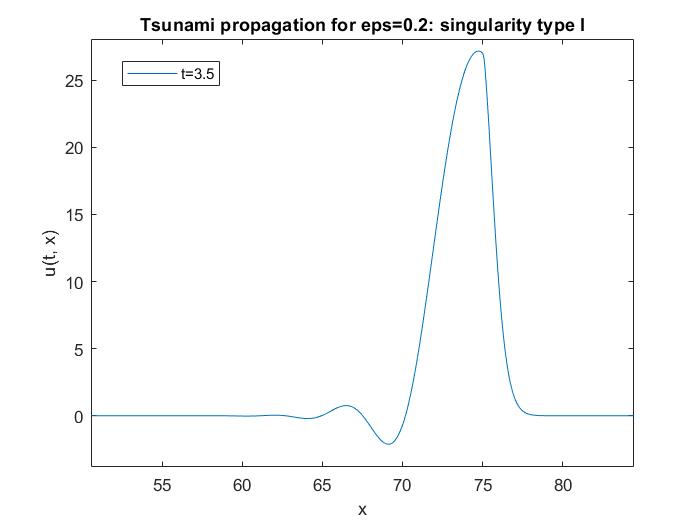}}
\end{minipage}
\hfill
\begin{minipage}[h]{0.45\linewidth}
\center{\includegraphics[scale=0.32]{./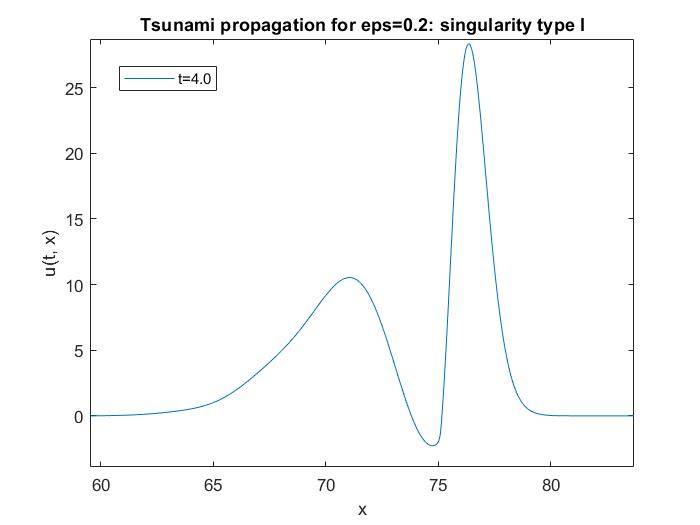}}
\end{minipage}
\hfill
\begin{minipage}[h]{0.45\linewidth}
\center{\includegraphics[scale=0.32]{./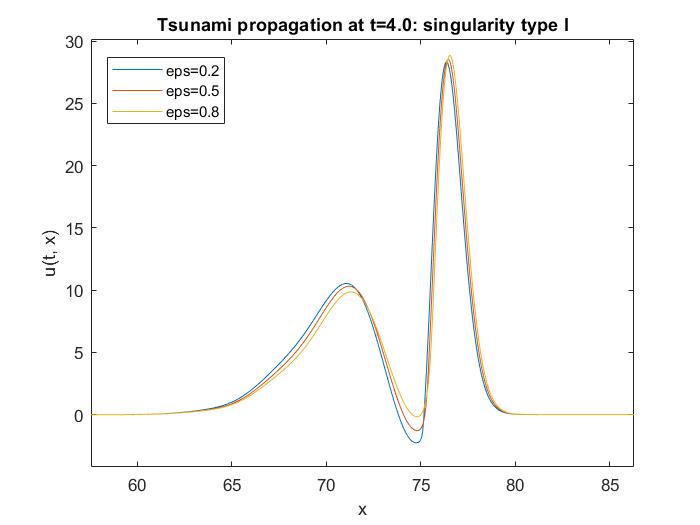}}
\end{minipage}
\caption{In these plots, the wave propagation corresponding to Case 2 is drawn at $t=3.0, 3.5, 4.0$. The right bottom plot shows that the solution of the regularised problem \eqref{num2} with the water depth function $h(x):=h_1(x)$ is stable under the changing parameter $\varepsilon$.} \label{fig4}
\end{figure}

\begin{figure}[ht!]
\begin{minipage}[h]{0.45\linewidth}
\center{\includegraphics[scale=0.32]{./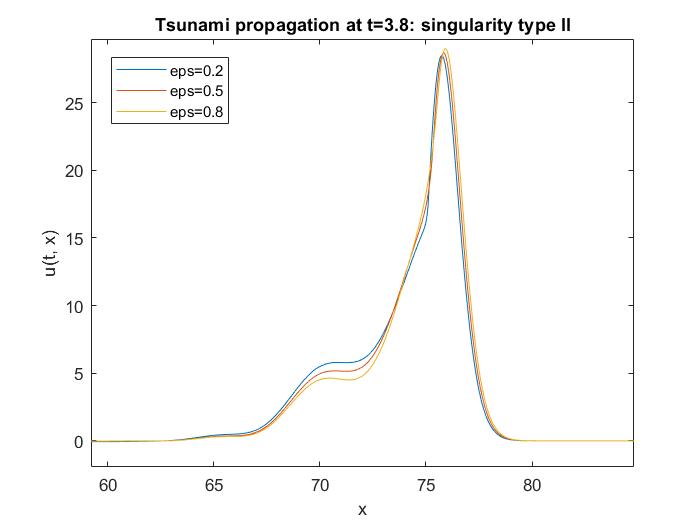}}
\end{minipage}
\hfill
\begin{minipage}[h]{0.45\linewidth}
\center{\includegraphics[scale=0.32]{./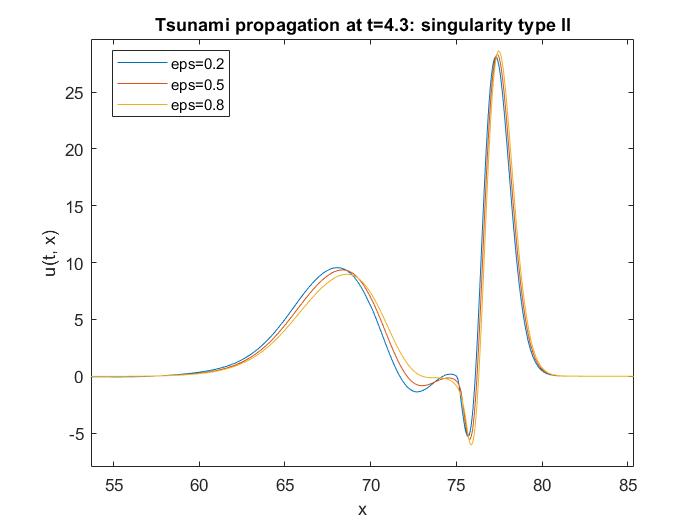}}
\end{minipage}
\hfill
\begin{minipage}[h]{0.45\linewidth}
\center{\includegraphics[scale=0.32]{./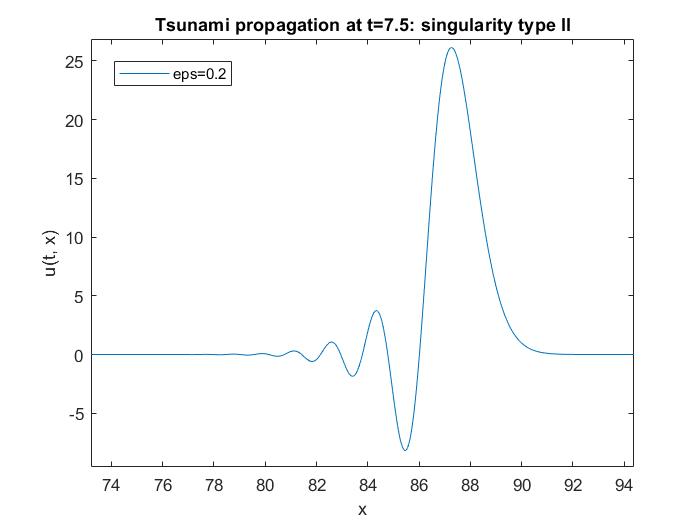}}
\end{minipage}
\hfill
\begin{minipage}[h]{0.45\linewidth}
\center{\includegraphics[scale=0.32]{./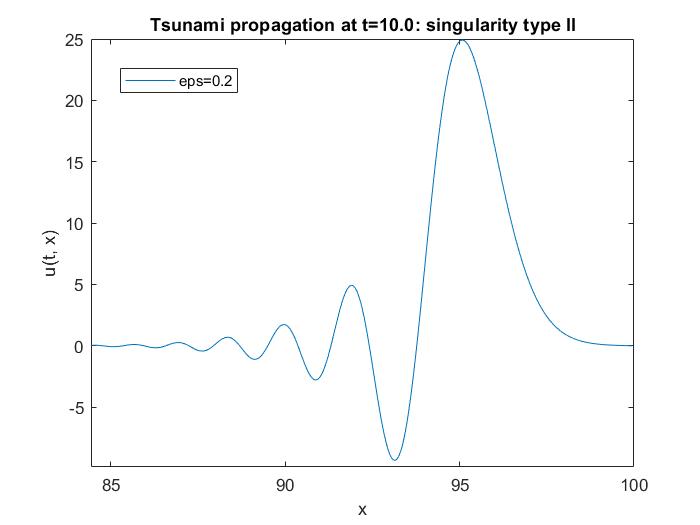}}
\end{minipage}
\caption{In these plots, the wave propagation corresponding to Case 3 is drawn at $t=3.8, 4.3$ for $\varepsilon=0.2, 0.5, 0.8$ and at $t=7.5, 10.0$ for $\varepsilon=0.2$. The plots show that the solution of the regularised problem \eqref{num2} with the water depth function $h(x):=h_2(x)$ is stable under the changing parameter $\varepsilon$.}
\label{fig5}
\end{figure}

Here, we consider 1D tsunami wave propagation equation
\begin{equation}
 u_{tt}(t,x) - \partial_{x}\left(h(x)\partial_{x}u(t,x)\right)=0, \,\,\,(t,x)\in (0,T)\times \left(0,100\right),
\label{num1}
\end{equation}
with the initial conditions
$$
 u(0,x)=u_{0}(x), \, u_{t}(0,x)=u_{1}(x),
$$
for all $x\in\left[0,100\right].$

In this work we are interested in the singular cases of the coefficient $h(x)$. Even, we can allow them to be distributional, in particular, to have $\delta$-like or $\delta^2$-like singularities. As it was theoretically outlined in \cite{RT17a} and \cite{RT17b}, we start to analyse our problem by
regularising a distributional valued function $h(x)$  by a parameter $\varepsilon$, that is, we set
\begin{equation}
h_{\varepsilon}(x):=(h*\varphi_{\varepsilon})(x)
\end{equation}
as the convolution with the mollifier
\begin{equation}
\varphi_{\varepsilon}(x)=\frac{1}{\varepsilon}\varphi(x/\varepsilon),
\end{equation}
where
$\varphi(x)=
c \exp{\left(\frac{1}{x^{2}-1}\right)}$ for $|x| < 1,$ and $\varphi(x)=0$ otherwise. Here  $c \simeq 2.2523$ to get
$\int\limits_{-\infty}^{\infty}  \varphi(x)dx=1.$

First, we study the following model situation:
\begin{itemize}
\item Case 1, when the water depth function $h(x)$ is given by
\begin{equation}
h_0(x)=
\left\lbrace
\begin{array}{l}
100, \,\,\, 0\leq x<75,\\
10, \,\,\, 75 \leq x \leq 100.
\label{h2case}
\end{array}
\right.
\end{equation}
\end{itemize}

As the second step, we study singular situations:
\begin{itemize}
\item Case 2, when the water depth function $h(x)$ has a singularity. That is,
$$
h_1(x):=h_0(x)+\delta(x-70),
$$
where $\delta$ is Dirac's function. By regularisation process described in above, we get
$$
h_{1, \varepsilon}(x)=h_{0, \varepsilon}(x) + \varphi_{\varepsilon}(x-70).
$$
\item Case 3, when the water depth function $h(x)$ has even more higher order of singularity, namely,
$$
h_2(x)=h_0(x)+\delta^{2}(x-70),
$$
in the sense that
$$
h_{2, \varepsilon}(x):=h_{0, \varepsilon}(x) + \varphi_{\varepsilon}^{2}(x-70).
$$
\end{itemize}
In what follows, we investigate all three cases.

As it was adjusted in the theoretical part, instead of \eqref{num1} we study the following
regularised problem
\begin{equation}
\partial^2 _{tt} u_{\varepsilon}(t,x) - \partial_{x}\left(h_{\varepsilon}(x)\partial_{x}u_{\varepsilon}(t,x)\right)=0, \,\,\,(t,x)\in\left(0,T\right)\times \left(0,100\right),
\label{num2}
\end{equation}
with the Cauchy data
$$
u_{\varepsilon}(0,x)=u_{0}(x), \, \partial_{t} u_{\varepsilon}(0,x)=u_{1}(x),
$$
for all $x\in\left[0,100\right].$

For our tests, we take $u_1(x) \equiv 0 $ and
\begin{equation}
\label{u0}
u_0(x)=40\exp({-(x-40)^2/8}).
\end{equation}

Now, let us analyse the results of the numerical simulations. Figure \ref{fig1} shows the graphics of the initial water level function $u_0(x)$ and the depth function $h(x):=h_0(x)$. In particular, for $\varepsilon=1.0$ the graphic of regularisation $h_{0, \varepsilon}(x)$ of the function $h_0(x)$ corresponding to Case 1 is also given. The function $h_0(x)$ has discontinuity at point $75$.

In Figure \ref{fig2} for Case 1 we study an evolution of the solution of the regularised tsunami equation \eqref{num2} for $\varepsilon=0.2$ at $t=1.15, 3.00, 3.25, 3.55, 4.10, 5.00$. From the pictures we observe that the height of the wave is starting to increase as reaching the discontinuity point. Also, a reflected wave appears.

In Figure \ref{fig3} we compare the solution of the regularised tsunami equation \eqref{num2} for $\varepsilon=0.02, 0.05,$ $0.1, 0.2, 0.5, 0.8$ at time $t=5.00$ in Case 1. From the plot we can see that the solution $u_{\varepsilon}(t, x)$ of the regularised problem \eqref{num2} is stable as $\varepsilon\to0$.

Figures \ref{fig4} and \ref{fig5} illustrate the wave propagation corresponding to singular Cases 2 and 3 at different times. The plots show that the solutions of the regularised problem \eqref{num2} with the water depth functions $h(x):=h_1(x)$ and $h(x):=h_2(x)$ are stable under the changing parameter $\varepsilon$.

In 1D case, for numerical computations we use the Crank-Nicolson method. All simulations are made in Math Lab 2018b. For all simulations we take $\Delta t = 0.05, \Delta x = 0.005.$



\subsection{Limiting behaviour as  $\varepsilon\to 0$ }

As we see from the graphs, it appears that the regularised solutions may have a limit as $\varepsilon\to 0$.

\subsubsection{Discontinuous case}
For illustration of this limiting behaviour as $\varepsilon\to 0$ of the solution of the regularised problems, as an example, we investigate Case 1 in more details. First of all, let us fix moments of $\varepsilon$ at $\varepsilon_1$ and $\varepsilon_2$. So, we will study the difference of the solution of the equation \eqref{num2} with the initial data as in \eqref{u0} at these two moments of $\varepsilon$, namely, $\Vert u_{\varepsilon_1}(t, \cdot)-u_{\varepsilon_2}(t, \cdot) \Vert_{L^2}$, and its limit as $\varepsilon_1, \varepsilon_2 \to 0$. Indeed, we have
\begin{equation}
U_{tt}(t,x) - \partial_{x}\left(h_{\varepsilon_1}(x)\partial_{x}U(t,x)\right)=\partial_{x}\left(H(x)\partial_{x}u_{\varepsilon_{2}}(t,x)\right),
\label{num3}
\end{equation}
with the Cauchy data
$$
U(0,x)=0, \, U_{t}(0,x)=0,
$$
where $U:=[u_{\varepsilon_{1}}-u_{\varepsilon_{2}}]$ and $H:=[h_{\varepsilon_1}-h_{\varepsilon_2}]$.

Since the solution $U$ linearly depends on $H$, we start by calculating it:
\begin{equation*}
\begin{split}
H(x) & = h_{\varepsilon_1}(x) - h_{\varepsilon_2}(x)=(h*\varphi_{\varepsilon_1})(x)-(h*\varphi_{\varepsilon_2})(x) \\
& = \int\limits_{-\infty}^{\infty}  h(s) \frac{1}{\varepsilon_1} \varphi\left(\frac{x-s}{\varepsilon_1}\right) d s - \int\limits_{-\infty}^{\infty}  h(s) \frac{1}{\varepsilon_2} \varphi\left(\frac{x-s}{\varepsilon_2}\right) d s.
\end{split}
\end{equation*}
Taking into account that we are considering Case 1 and using an explicit form of $h(x)$, we get
\begin{equation*}
\begin{split}
H(x) = & 100 \left[\int\limits_{x - \varepsilon_1}^{75}  \frac{1}{\varepsilon_1} \varphi\left(\frac{x-s}{\varepsilon_1}\right) d s - \int\limits_{x - \varepsilon_2}^{75} \frac{1}{\varepsilon_2} \varphi\left(\frac{x-s}{\varepsilon_2}\right) d s  \right]\\
& + 10 \left[\int\limits_{75}^{x + \varepsilon_1}  \frac{1}{\varepsilon_1} \varphi\left(\frac{x-s}{\varepsilon_1}\right) d s - \int\limits_{75}^{x + \varepsilon_2} \frac{1}{\varepsilon_2} \varphi\left(\frac{x-s}{\varepsilon_2}\right) d s  \right] \\
= & 100 \int\limits^{\frac{x-75}{\varepsilon_2}}_{\frac{x-75}{\varepsilon_1}}  \varphi(z) d z - 10 \int\limits^{\frac{x-75}{\varepsilon_2}}_{\frac{x-75}{\varepsilon_1}}  \varphi(z) d z  = 90 \int\limits^{\frac{x-75}{\varepsilon_2}}_{\frac{x-75}{\varepsilon_1}}  \varphi(z) d z.
\end{split}
\end{equation*}
Since $\varphi(x)$ is a compactly supported function, from the above calculations it is easy to see that for the sufficiently small parameters $\varepsilon_1$ and $\varepsilon_2$ the function $H(x)$ is identically zero.

\begin{rem}
Note that if instead of $\varphi_{\varepsilon_2}$ we take another mollifier $\psi_{\varepsilon_2}$ with the same properties then we obtain
\begin{equation*}
\begin{split}
H(x) = & 100 \left[\int\limits_{x - \varepsilon_1}^{75}  \frac{1}{\varepsilon_1} \varphi\left(\frac{x-s}{\varepsilon_1}\right) d s - \int\limits_{x - \varepsilon_2}^{75} \frac{1}{\varepsilon_2} \psi\left(\frac{x-s}{\varepsilon_2}\right) d s  \right]
\\
& + 10 \left[\int\limits_{75}^{x + \varepsilon_1}  \frac{1}{\varepsilon_1} \varphi\left(\frac{x-s}{\varepsilon_1}\right) d s
- \int\limits_{75}^{x + \varepsilon_2} \frac{1}{\varepsilon_2} \psi\left(\frac{x-s}{\varepsilon_2}\right) d s  \right]
\\
= & 100 \int\limits^{1}_{\frac{x-75}{\varepsilon_1}}  \varphi(z) d z - 100 \int\limits^{1}_{\frac{x-75}{\varepsilon_2}}  \psi(z) d z
+ 10 \int\limits^{\frac{x-75}{\varepsilon_2}}_{-1}  \varphi(z) d z
- 10 \int\limits^{\frac{x-75}{\varepsilon_1}}_{-1}  \psi(z) d z
\\
= & 100 \int\limits^{\frac{x-75}{\varepsilon_2}}_{\frac{x-75}{\varepsilon_1}}  \varphi(z) d z + 100 \int\limits^{1}_{\frac{x-75}{\varepsilon_2}}  (\varphi-\psi)(z) d z
+ 10 \int\limits^{\frac{x-75}{\varepsilon_2}}_{-1}  (\varphi-\psi)(z) d z
+ 10 \int\limits^{\frac{x-75}{\varepsilon_2}}_{\frac{x-75}{\varepsilon_1}}  \psi(z) d z
\\
= & 100 \int\limits^{\frac{x-75}{\varepsilon_2}}_{\frac{x-75}{\varepsilon_1}}  \varphi(z) d z + 90 \int\limits^{1}_{\frac{x-75}{\varepsilon_2}}  (\varphi-\psi)(z) d z
+ 10 \int\limits^{\frac{x-75}{\varepsilon_2}}_{\frac{x-75}{\varepsilon_1}}  \psi(z) d z.
\end{split}
\end{equation*}
Interesting to note that the last expression is also tending to zero as $\varepsilon_1, \varepsilon_2\to0$.
\end{rem}

Thus, we conclude for the sufficiently small parameters $\varepsilon_1$ and $\varepsilon_2$ the solution $U(t, x)$ of the problem \eqref{num3} is identically zero. Finally, it shows that
$$
\Vert u_{\varepsilon_1}(t, \cdot)-u_{\varepsilon_2}(t, \cdot) \Vert_{L^2}=0,
$$
as $\varepsilon_1, \varepsilon_2 \to 0$.

Therefore, a surprising conclusion is that while, in general, the solution  of the equation
\eqref{num1} may not exist in a `classical' sense for singular $h$, the limit (as $\varepsilon\to 0$) of the very weak solution family $u_\varepsilon$ may exist. We can then talk about the {\em limiting very weak solution} of \eqref{num1}  as the limit of the family $u_\varepsilon$.

\subsubsection{Irregular case}

For illustration of this limiting behaviour as $\varepsilon\to 0$ of the solution of the regularised problems, as a second example, we investigate Case 2 in more details. First of all, let us fix moments of $\varepsilon$ at $\varepsilon_1$ and $\varepsilon_2$. So, we will study the difference of the solution of the equation \eqref{num2} with the initial data as in \eqref{u0} at these two moments of $\varepsilon$, namely, $\Vert u_{\varepsilon_1}(t, \cdot)-u_{\varepsilon_2}(t, \cdot) \Vert_{L^2}$, and its limit as $\varepsilon_1, \varepsilon_2 \to 0$. Indeed, we have
\begin{equation}
U_{tt}(t,x) - \partial_{x}\left(h_{\varepsilon_1}(x)\partial_{x}U(t,x)\right)=\partial_{x}\left(H(x)\partial_{x}u_{\varepsilon_{2}}(t,x)\right),
\label{num4}
\end{equation}
with the Cauchy data
$$
U(0,x)=0, \, U_{t}(0,x)=0,
$$
where $U:=[u_{\varepsilon_{1}}-u_{\varepsilon_{2}}]$ and $H:=[h_{\varepsilon_1}-h_{\varepsilon_2}]$.

By changing
$$
V(t, x):= \int\limits_{-\infty}^{x} U(t, s) ds,
$$
instead of the equation \eqref{num4} we get
\begin{equation}
V_{tt}(t,x) - h_{\varepsilon_1}(x)\partial_{xx}V(t,x)=H(x)\partial_{x}u_{\varepsilon_{2}}(t,x),
\label{num5}
\end{equation}
with the Cauchy data
$$
V(0,x)=0, \, V_{t}(0,x)=0.
$$

Repeating the above procedure, let us calculate $H$:
\begin{equation*}
\begin{split}
H(x) & = h_{\varepsilon_1}(x) - h_{\varepsilon_2}(x)=(h*\varphi_{\varepsilon_1})(x)-(h*\varphi_{\varepsilon_2})(x) \\
& = \int\limits_{-\infty}^{\infty}  h(s) \frac{1}{\varepsilon_1} \varphi\left(\frac{x-s}{\varepsilon_1}\right) d s - \int\limits_{-\infty}^{\infty}  h(s) \frac{1}{\varepsilon_2} \varphi\left(\frac{x-s}{\varepsilon_2}\right) d s.
\end{split}
\end{equation*}
Taking into account that we are considering Case 2 and using an explicit form of $h(x)$, we get
\begin{equation*}
H(x) =  A(x) + D(x),
\end{equation*}
where
\begin{equation*}
A(x) = 90 \int\limits^{\frac{x-75}{\varepsilon_2}}_{\frac{x-75}{\varepsilon_1}}  \varphi(z) d z \,\,\,\, \hbox{and} \,\,\,\, D(x) = \frac{1}{\varepsilon_1} \varphi\left(\frac{x-70}{\varepsilon_1}\right) - \frac{1}{\varepsilon_2} \varphi\left(\frac{x-70}{\varepsilon_2}\right).
\end{equation*}
Since $\varphi(x)$ is a compactly supported function, from the above calculations it is easy to see that for the sufficiently small parameters $\varepsilon_1$ and $\varepsilon_2$ the function $A(x)$ is identically zero. Also, we note that the function $D(x)$ has a compact support
$$
\supp D\subset [70-\max(\varepsilon_1, \varepsilon_2), 70+\max(\varepsilon_1, \varepsilon_2)].
$$
Without loss of generality, we assume that $\varepsilon_1\geq\varepsilon_2$. Then it is clear that
$$
\supp D = \supp_{sing} h_{\varepsilon_1}\subset[70-\varepsilon_1, 70+\varepsilon_1]=:\Omega_{\varepsilon_1}.
$$
Note that when $x\in\mathbb R\setminus\Omega_{\varepsilon_1}$ we have the {\it Discontinuous case}. Now we are interested in the case when $x\in\Omega_{\varepsilon_1}$. Thus, for small enough $\varepsilon_1$, we have
$$
\varepsilon_1 V_{tt}(t,x) - \left[\varepsilon_1h_{0, \varepsilon_1}(x) + \varphi\left(\frac{x-70}{\varepsilon_1}\right)\right]\partial_{xx}V(t,x) = \varepsilon_1 H(x)\partial_{x}u_{\varepsilon_{2}}(t,x),
$$
and by neglecting the small terms, we arrive at the elliptic type problem
\begin{equation}
- \varphi\left(\frac{x-70}{\varepsilon_1}\right)\partial_{xx}V(t,x)=\left(\varphi\left(\frac{x-70}{\varepsilon_1}\right) - \frac{\varepsilon_1}{\varepsilon_2} \varphi\left(\frac{x-70}{\varepsilon_2}\right)\right)\partial_{x}u_{\varepsilon_{2}}(t,x).
\label{num6}
\end{equation}
Dividing both sides of \eqref{num6} by $\varphi\left(\frac{x-70}{\varepsilon_1}\right)$, we obtain
\begin{equation}
- \partial_{xx}V(t,x)=\left(1 - \frac{\varepsilon_1}{\varepsilon_2} \hat{\varphi}(x, \varepsilon_2)\right)\partial_{x}u_{\varepsilon_{2}}(t,x),
\label{num7}
\end{equation}
where $\supp{\hat{\varphi}}=\Omega_{\varepsilon_2}$ and $x\in\Omega_{\varepsilon_1}$. By integrating over $\int_{-\infty}^{x}$ and taking into account that $U(t,x) = \partial_{x}V(t,x)$, we arrive at
\begin{equation}
U(t, x) = u_{\varepsilon_{2}}(t, 70+\varepsilon_1) - u_{\varepsilon_{2}}(t, 70 - \varepsilon_1)  + \frac{\varepsilon_1}{\varepsilon_2} \int\limits_{70-\varepsilon_2}^{\min(70+\varepsilon_2, x)}\hat{\varphi}(s, \varepsilon_2)\partial_{s}u_{\varepsilon_{2}}(t,s) d s,
\label{num7-sol}
\end{equation}
for $x\in\Omega_{\varepsilon_1}$.

Now we need to estimate \eqref{num7-sol} in $L^2$-norm. For this, by adapting the energy conservation formula \eqref{CL-01} to $u_{\varepsilon_{2}}$, we obtain
\begin{equation}
\label{CL-02}
\begin{split}
\Vert \partial_{x}u_{\varepsilon_{2}}(t,\cdot)\Vert_{L^2}^{2} \leq \frac{1}{10}\Vert h_{\varepsilon_{2}}^{\frac{1}{2}}\partial_{x}u_{0}\Vert_{L^2}^{2} & = \frac{1}{10}\int\limits_{\mathbb R} h_{\varepsilon_{2}}(s)|\partial_{x}u_{0}(s)|^{2} d s \\
& = \frac{1}{10}\int\limits_{\mathbb R} \partial_{s}\hat{h}_{\varepsilon_{2}}(s)|\partial_{s}u_{0}(s)|^{2} d s,
\end{split}
\end{equation}
where $h_{\varepsilon_{2}}(s):=\partial_{s}\hat{h}_{\varepsilon_{2}}(s)$. Integrating by parts and taking into account the properties of $u_0$, from \eqref{CL-02} we get
\begin{equation}
\label{CL-03}
\begin{split}
\Vert \partial_{x}u_{\varepsilon_{2}}(t,\cdot)\Vert_{L^2}^{2} & \leq \frac{1}{10}\int\limits_{\mathbb R} \partial_{s}\hat{h}_{\varepsilon_{2}}(s)|\partial_{s}u_{0}(s)|^{2} d s \\
& = \frac{1}{5}\int\limits_{\mathbb R} \hat{h}_{\varepsilon_{2}}(s)|\partial_{s}^{2}u_{0}(s) \partial_{s}u_{0}(s)| d s\\
& = \frac{1}{5} \|\hat{h}_{\varepsilon_{2}}\|_{L^{\infty}} \|\partial_{s}^{2}u_{0}\|_{L^2} \|\partial_{s}u_{0}\|_{L^2}.
\end{split}
\end{equation}
The term $\Vert \partial_{x}u_{\varepsilon_{2}}(t,\cdot)\Vert_{L^2}$ does not blow up as $\varepsilon_2\to0$ since $\hat{h}_{\varepsilon_{2}}\to\hat{h}\in L^{\infty}$ as $\varepsilon_2\to0$. Repeating the process for $u_{\varepsilon_{2}}$, one obtains that $u_{\varepsilon_{2}}$ is also regular in $\Omega_{\varepsilon_1}$.

Since for $x\in\mathbb R\setminus\Omega_{\varepsilon_1}$ the function $U(t, x)$ equal the solution corresponding to Case 1, and due to the fact that the volume of the domain $\Omega_{\varepsilon_1}$ tends to zero as $\varepsilon_1\to0$, we conclude that for the sufficiently small parameters $\varepsilon_1$ and $\varepsilon_2$ the solution $U(t, x)$ of the problem \eqref{num4} tends to zero. Finally, it shows that
$$
\Vert u_{\varepsilon_1}(t, \cdot)-u_{\varepsilon_2}(t, \cdot) \Vert_{L^2}\to0,
$$
as $\varepsilon_1, \varepsilon_2 \to 0$.

Therefore, a surprising conclusion is that while, in general, the solution  of the equation
\eqref{num1} may not exist in a `classical' sense for singular $h$, the limit (as $\varepsilon\to 0$) of the very weak solution family $u_\varepsilon$ may exist. We can then talk about the {\em limiting very weak solution} of \eqref{num1}  as the limit of the family $u_\varepsilon$.

\subsubsection{Tests for singularities}

\begin{figure}[ht!]
\begin{minipage}[h]{0.45\linewidth}
\center{\includegraphics[scale=0.32]{./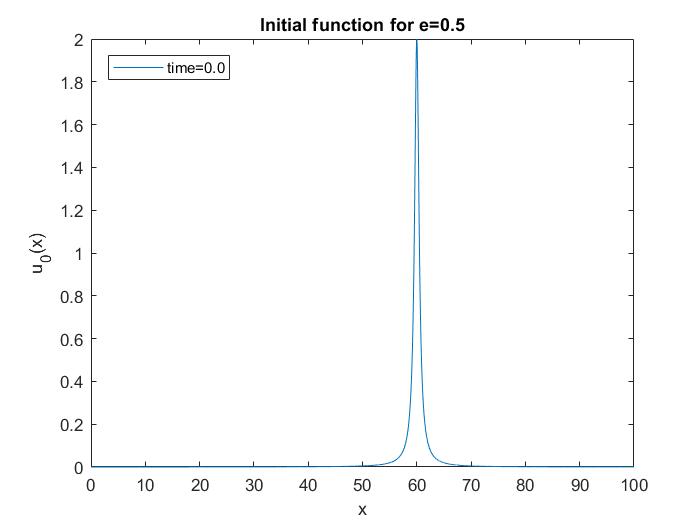}}
\end{minipage}
\hfill
\begin{minipage}[h]{0.45\linewidth}
\center{\includegraphics[scale=0.32]{./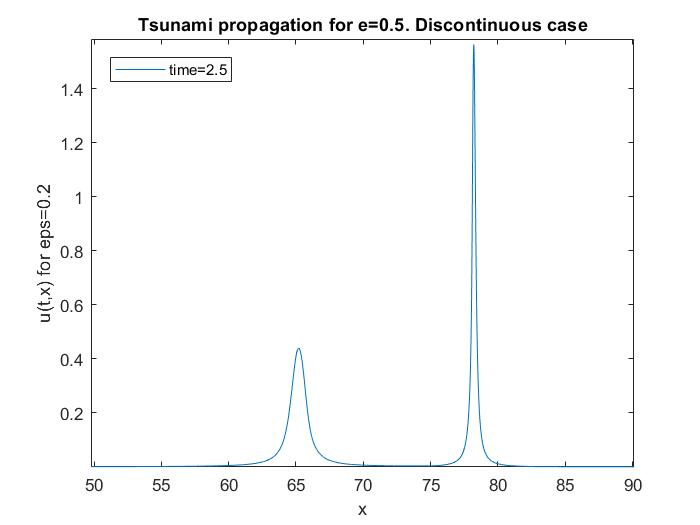}}
\end{minipage}
\hfill
\begin{minipage}[h]{0.45\linewidth}
\center{\includegraphics[scale=0.32]{./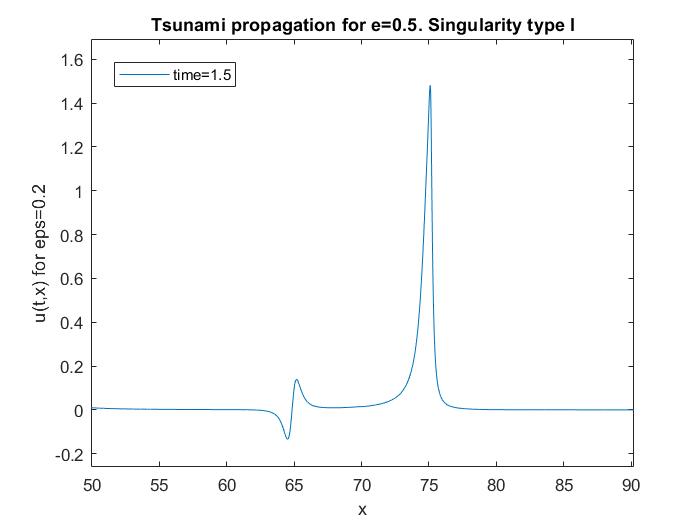}}
\end{minipage}
\hfill
\begin{minipage}[h]{0.45\linewidth}
\center{\includegraphics[scale=0.32]{./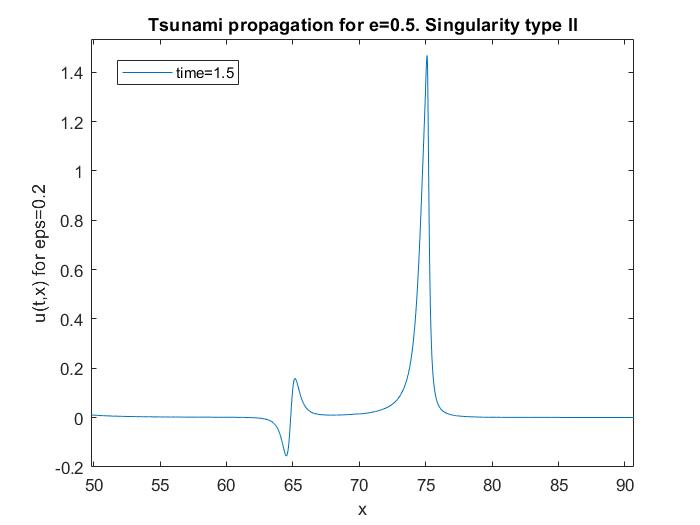}}
\end{minipage}
\caption{In these plots, the initial function $u_0$ given by \eqref{u0-2} and the wave propagation corresponding to the discontinuous and singular type I and II cases are drawn for $e=0.5$, respectively. All simulations are done for $\varepsilon=0.2$.}
\label{fig6a}
\end{figure}

\begin{figure}[ht!]
\begin{minipage}[h]{0.45\linewidth}
\center{\includegraphics[scale=0.32]{./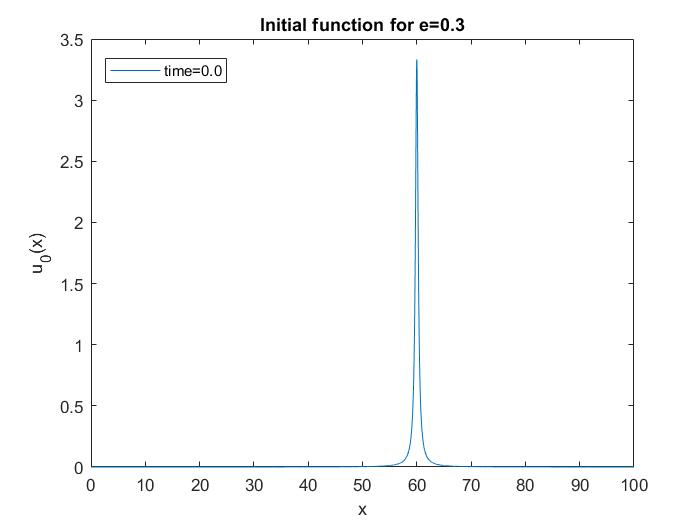}}
\end{minipage}
\hfill
\begin{minipage}[h]{0.45\linewidth}
\center{\includegraphics[scale=0.32]{./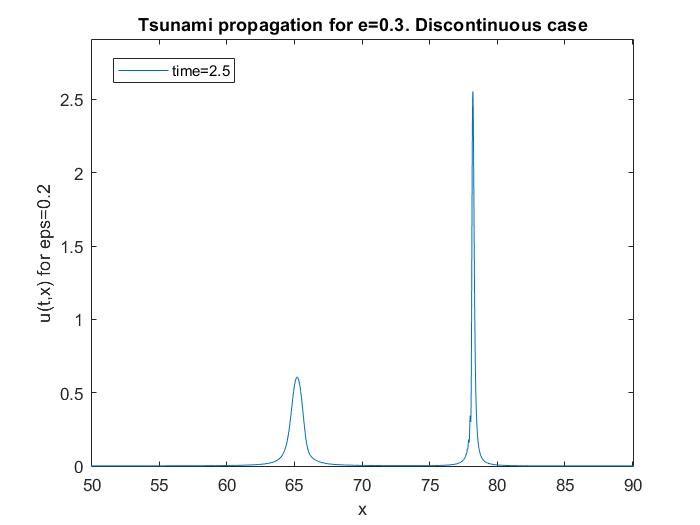}}
\end{minipage}
\hfill
\begin{minipage}[h]{0.45\linewidth}
\center{\includegraphics[scale=0.32]{./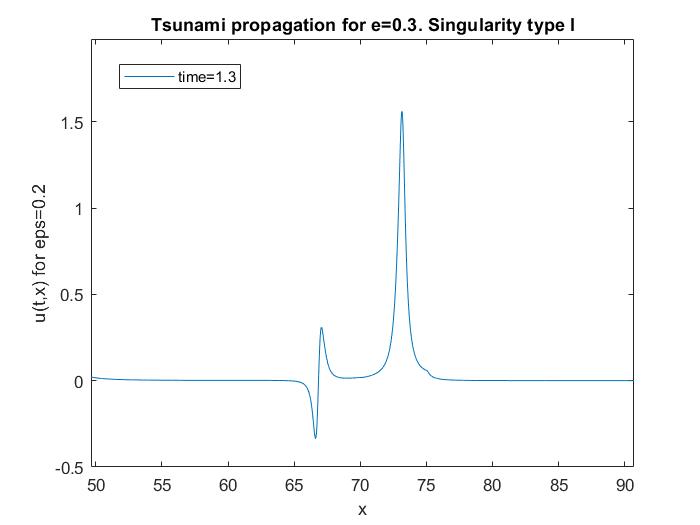}}
\end{minipage}
\hfill
\begin{minipage}[h]{0.45\linewidth}
\center{\includegraphics[scale=0.32]{./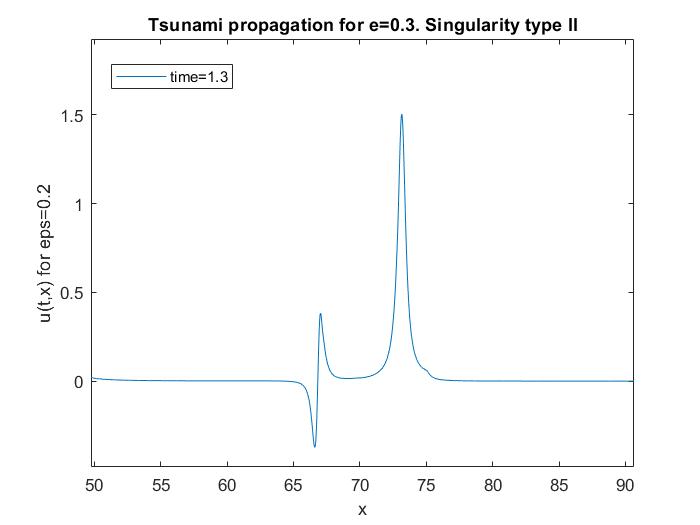}}
\end{minipage}
\caption{In these plots, the initial function $u_0$ given by \eqref{u0-2} and the wave propagation corresponding to the discontinuous and singular type I and II cases are drawn for $e=0.3$, respectively. All simulations are done for $\varepsilon=0.2$.}
\label{fig6b}
\end{figure}

\begin{figure}[ht!]
\begin{minipage}[h]{0.45\linewidth}
\center{\includegraphics[scale=0.32]{./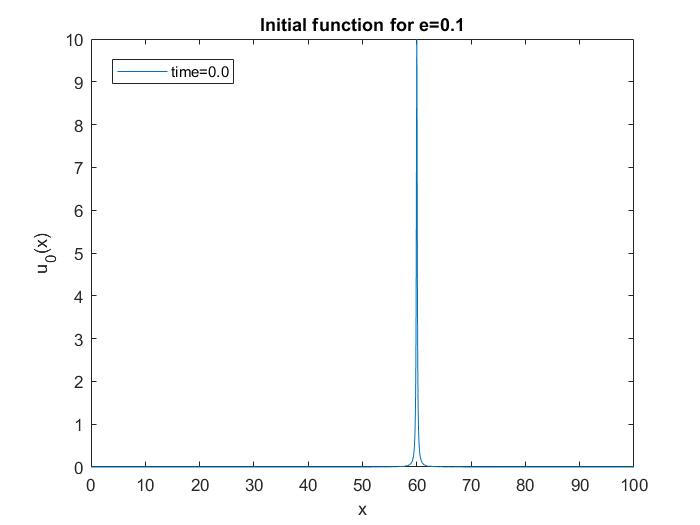}}
\end{minipage}
\hfill
\begin{minipage}[h]{0.45\linewidth}
\center{\includegraphics[scale=0.32]{./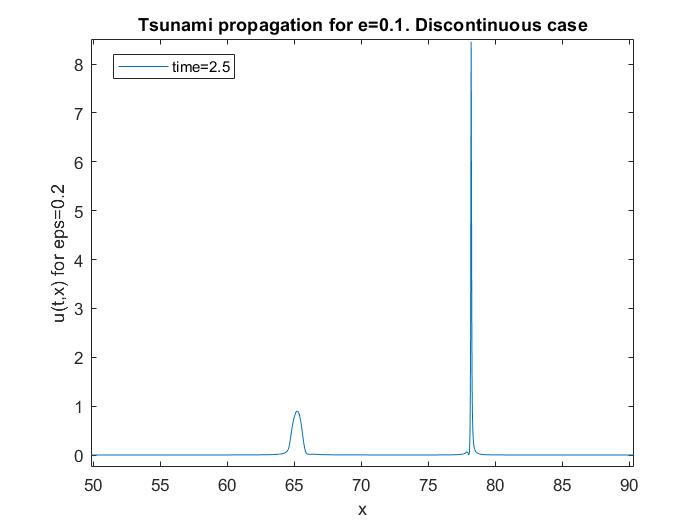}}
\end{minipage}
\hfill
\begin{minipage}[h]{0.45\linewidth}
\center{\includegraphics[scale=0.32]{./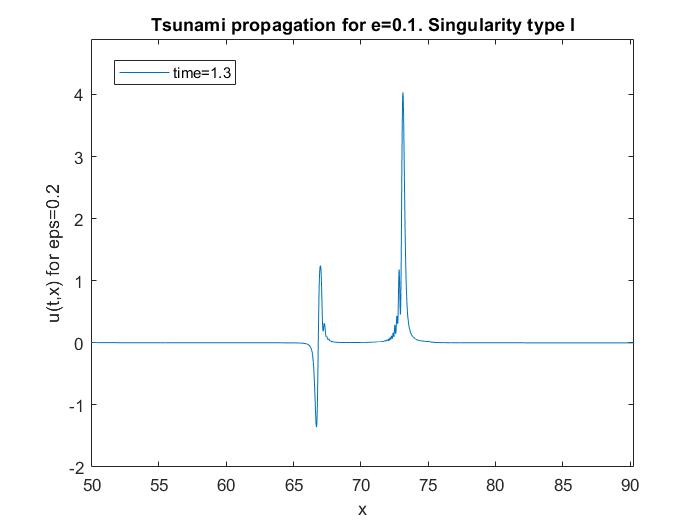}}
\end{minipage}
\hfill
\begin{minipage}[h]{0.45\linewidth}
\center{\includegraphics[scale=0.32]{./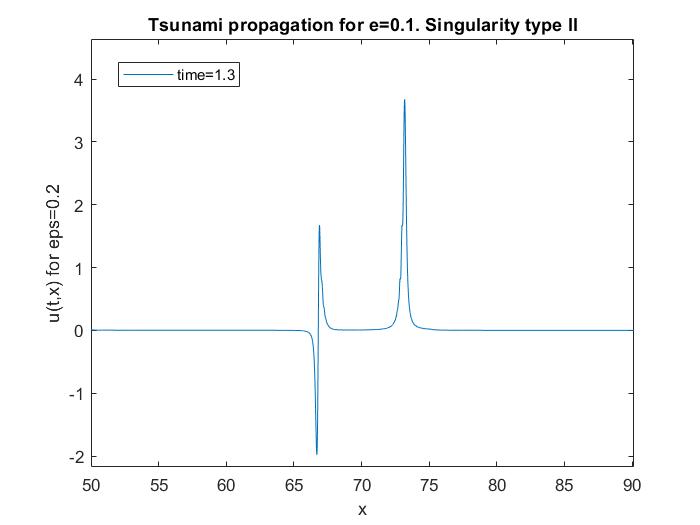}}
\end{minipage}
\caption{In these plots, the initial function $u_0$ given by \eqref{u0-2} and the wave propagation corresponding to the discontinuous and singular type I and II cases are drawn for $e=0.1$, respectively. All simulations are done for $\varepsilon=0.2$.}
\label{fig6c}
\end{figure}

To investigate singularities, we consider
\begin{itemize}
\item Discontinuous case, when the water depth function $h(x)$ is given by
\begin{equation}
h_0(x)=
\left\lbrace
\begin{array}{l}
100, \,\,\, 0\leq x<75,\\
10, \,\,\, 75 \leq x \leq 100.
\label{h2case}
\end{array}
\right.
\end{equation}
\item Singular type I case, when the water depth function $h(x)$ has a singularity. That is,
$$
h_1(x):=h_0(x)+100\delta(x-70),
$$
where $\delta$ is Dirac's function. By regularisation process described in above, we get
$$
h_{1, \varepsilon}(x)=h_{0, \varepsilon}(x) + 100\varphi_{\varepsilon}(x-70).
$$
\item Singular type II case, when the water depth function $h(x)$ has even more higher order of singularity, namely,
$$
h_2(x)=h_0(x)+100\delta^{2}(x-70),
$$
in the sense that
$$
h_{2, \varepsilon}(x):=h_{0, \varepsilon}(x) + 100\varphi_{\varepsilon}^{2}(x-70).
$$
\end{itemize}
Here, we simulate the cases of $h$ considering instead of $u_0$ given by \eqref{u0} the function
\begin{equation}
\label{u0-2}
u_0(x)=\frac{e}{(x-60)^2+e^2},
\end{equation}
for $e\in\mathbb R_{+}$.

In Figures \ref{fig6a}-\ref{fig6c} we test for $e=0.5, 0.3, 0.1.$ The reason for this investigation is to see the strength of the singularity in the reflected wave. We observe that the singularity of the reflected wave (the sharpness of the peak) is less than that of the main wave in the Case I, while the reflected singularity seems to be of the same strength in Cases II and III.
In this respect, the behaviour in Case I resembles more that of the conical refraction corresponding to multiplicities (as in \cite{KR07}), while Cases II and III appear to be more like acoustic echo-type effects for singular media (as in \cite{MRT19}).

In all cases the second wave is smaller in size. The reflected wave has only one positive component in Case I, while it has both positive and negative parts in Cases II and III.

\subsection{2D case}

\begin{figure}[ht!]
\begin{minipage}[h]{0.45\linewidth}
\center{\includegraphics[scale=0.32]{./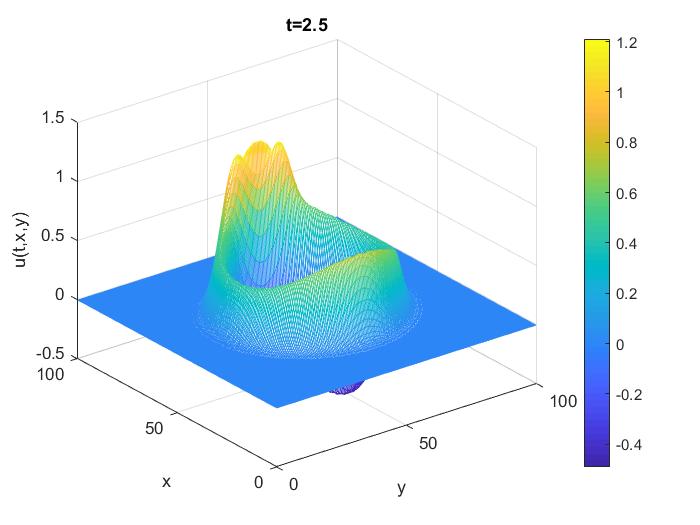}}
\end{minipage}
\hfill
\begin{minipage}[h]{0.45\linewidth}
\center{\includegraphics[scale=0.32]{./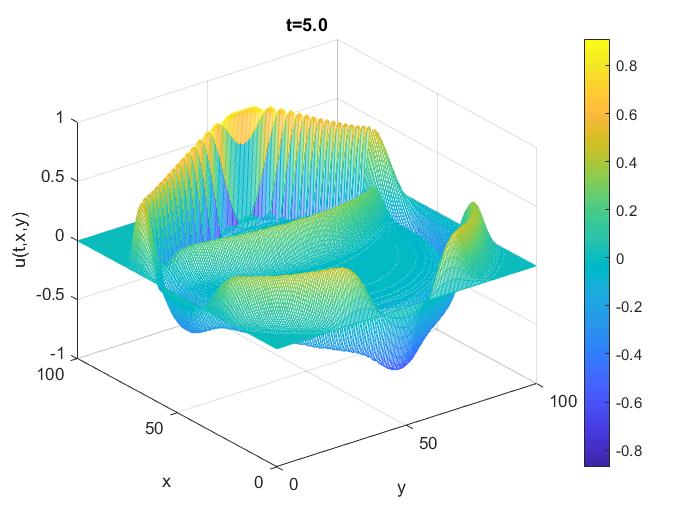}}
\end{minipage}
\caption{Displacement of the wave corresponding to the equation \eqref{2dwave_1} for $\varepsilon=0.8$
at $t=2.5$ and $t=5.0$.}
\label{fig6}
\end{figure}

In the domain $\left[0,T\right]\times \left[0,100\right]\times \left[0,100\right]$, we simulate the following boundary value problem for the 2D tsunami equation
$$
u_{tt}(t,x,y) - [\partial_{x}\left(H(x,y)\partial_{x}u(t,x,y)\right)+\partial_{y}\left(H(x,y)\partial_{y}u(t,x,y)\right)]=0,
$$
with the initial data
$$
u(0,x,y)=u_{0}(x,y), \,\,\,  u_{t}(0,x,y)=u_{1}(x,y), \,\,\,x,y\in\left[0,100\right],
$$
and boundary conditions
$$
u(t,0,y)=0,\, u(t,100,y)=0,\, u(t,x,0)=0,\, u(t,x,100)=0,
$$
for $x,y\in\left[0,100\right],$ for $t\in\left[0,T\right]$.

Now we introduce a space-time grid with steps $h_x, h_y, \tau  $ in the variables $t, x, y$, respectively:
\begin{equation}
\label{2dwave_2}
 \omega_{h_x,h_y}^\tau={\{t_k=k\tau,\,\, k=\overline{0,M}; x_i=ih_x, y_j=jh_y,\,\, i, j=\overline{0,N}}\},
 \end{equation}
where $\tau M=T, h_x N=h_y N=100$. For numerically solving this problem we use an implicit finite difference scheme \cite{Sam} and the cyclic reduction method \cite{Gode11}.

In the two-dimensional model, we consider 'Case 1' corresponding to 1D simulations. For the water depth function $H(x, y)$ we put
$$
H(x, y) := h_0(x),
$$
in $x$ variable and constant in $y$ variable. Here $h_0(x)$ is as in \eqref{h2case}. Eventually, for the regularisation of $H(x, y)$ we get $H_{\varepsilon}(x, y) = h_{0, \varepsilon}(x).$ For the simulations we solve the following regularised equation
\begin{equation}
\label{2dwave_1}
u_{tt}(t,x,y) - [\partial_{x}\left(H_{\varepsilon}(x,y)\partial_{x}u(t,x,y)\right)+\partial_{y}\left(H_{\varepsilon}(x,y)\partial_{y}u(t,x,y)\right)]=0,
\end{equation}
with the initial functions
$$
u_0(x, y) = 50\exp({-((x-40)^2+(y-50)^2)/8})
$$
and $u_1(x, y) = 0$.

In 2D case, numerical computations and simulations are made in python by using the cyclic reduction method. For all simulations we take $\Delta t = 0.5, \Delta x = 0.05, \Delta y = 0.05.$

\section{GPU computing}

Modeling a wider area and long-term modeling using a standard personal computer requires more time, and it is often very important to reduce the computation time. Modern graphical processing units provide a powerful instrument for parallel processing with massively data-parallel throughput-oriented multi-core processors capable of providing TFLOPS of computing performance and quite high memory bandwidth. So with the aim of reducing computation time in this work, we use GPU computing.

In this section we show the results obtained on a desktop computer with configuration  4352 cores GeForce RTX 2080 TI, NVIDIA GPU together with a CPU Intel Core(TM) i7-9800X, 3.80 GHz, RAM 64Gb.
Simulation parameters are configured as follows. Mesh size is uniform in both directions with $\Delta x = \Delta y$ and numerical time step $\Delta t$ is 0.05, and simulation time is $T=5.0$, therefore the total number of time steps is 100. To present more realistic data, we tested five cases with computational domain sizes of $256\times256, 512\times512, 1024\times1024, 2048\times2048$ and $4096\times4096$.\par
The performance of a parallel algorithm is determined by calculating its speedup. The speedup is defined as the ratio of the execution time of the sequential algorithm for a particular problem to the execution time of the parallel algorithm.
$$\mathrm{Speedup}=\frac{\mathrm{CPUtime}}{\mathrm{GPUtime}} $$\par
In  Table \ref{table1} we report the execution times in seconds for serial (CPU time) and CUDA (GPU time) implementation of cyclic reduction method to the problem \eqref{2dwave_1} together with the values of the speedup.
\begin{table}
\caption{Execution timing and speedup with the Intel Core(TM) i7-9800X, 3.80 GHz,  NVIDIA RTX 2080 TI}
\begin{tabular}{  c | c | c | c}
\hline
Domain sizes & CPU time  & GPU time & Speedup \\
\hline
$256\times 256$ & 0.91  & 0.88 & 1.03\\
$512\times 512$ & 3.73  & 2.07 & 1.8\\
$1024\times 1024$ & 15.92   & 7.16 & 2.22\\
$2048\times 2048$  & 64.8  & 20.30 & 3.19\\
$4096\times 4096$ & 280.54 & 62.76 & 4.47\\
\hline
\end{tabular}\label{table1}
\end{table}

\end{document}